\title[On sumsets of subgroups in $\mathbb Z_p^*$]{A note on sumsets of subgroups in $\mathbb Z_p^*$.}
\author{Derrick Hart}
\address{ Department of Mathematics\\
          Kansas State University\\
          Manhattan, KS 66506}
\email{dnhart@math.ksu.edu}
 \newcommand{\comments}[1]{}
\begin{document}
\newtheorem{theorem}{Theorem}
\newtheorem{conjecture}[theorem]{Conjecture}
\newtheorem{corollary}[theorem]{Corollary}
\newtheorem{lemma}[theorem]{Lemma}
\theoremstyle{remark}
\newtheorem{remark}{Remark}[section]
\newtheorem{problem}{Open problem}

\maketitle

\setstretch{1.2}

\begin{abstract}
Let $A$ be a multiplicative subgroup of $\mathbb Z_p^*$. Define the $k$-fold sumset of $A$ to be $kA=\{x_1+\dots+x_k:x_i \in A,1\leq i\leq k\}$.  We show that $6A\supseteq \mathbb Z_p^*$ for $|A| > p^{\frac {11}{23} +\epsilon}$.  In addition, we extend a result of Shkredov to show that $|2A|\gg |A|^{\frac 85-\epsilon}$ for $|A|\ll p^{\frac 59}$.

\end{abstract}
\section{Introduction}
For subsets $A_1,\ldots,A_k$ of a group define $A_1+\dots+A_k=\{a_1+\dots+a_k: a_i\in A_i, 1\leq i\leq k\}$.  In the case that all the subsets are equal we will denote the $k$-fold sumset of $A$ by $kA=\{x_1+\dots+x_k:x_i \in A,1\leq i\leq k\}$.

Let $A$ be a multiplicative subgroup of $\mathbb Z_p^*$.  What is the smallest $\alpha>0$ such that $|A|\gg p^{\alpha}$ implies that $2A$ contains $\mathbb Z_p^*$?
\begin{conjecture}
Let $|A|> p^{\frac 12 +\epsilon}, \epsilon>0$ then $2A$ contains $\mathbb Z_p^*$. 
\end{conjecture}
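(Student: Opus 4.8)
The plan is to reduce the covering statement to a strictly positive lower bound on a representation count and then to extract enough cancellation from the additive Fourier transform of $A$. Fix $n \in \mathbb Z_p^*$, let $N(n) = \#\{(a,b) \in A \times A : a+b = n\}$, write $e_p(y) = e^{2\pi i y/p}$, and set $S(x) = \sum_{a \in A} e_p(ax)$. Expanding the indicator of $a+b=n$ in additive characters gives
\[
N(n) = \frac{|A|^2}{p} + \frac1p \sum_{x \neq 0} e_p(-nx)\, S(x)^2 .
\]
The main term exceeds $1$ exactly when $|A| > \sqrt p$, which is also the trivial cardinality obstruction, since $|2A| \le |A|^2$ forces $|A| \gtrsim \sqrt p$ for covering; thus the threshold $\tfrac12$ is best possible, and the whole conjecture is the assertion that the error term is strictly smaller than $|A|^2/p$ as soon as $|A| > p^{1/2+\epsilon}$.

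First I would isolate the two inputs that are unconditionally available. The Gauss-sum expansion of the subgroup indicator gives the pointwise bound $\max_{x \neq 0} |S(x)| \ll \sqrt p$, while Parseval gives $\sum_{x \neq 0} |S(x)|^2 = p|A| - |A|^2$. Used together these only bound the error by $\ll |A|$, which beats $|A|^2/p$ only for $|A| \gg p$; and the device that makes the $6A$ result work --- replacing $S(x)^2$ by $S(x)^k$ and paying $(\max|S|)^{k-2}$ --- gives the covering threshold $\tfrac{k}{2(k-1)}$, which exceeds $\tfrac12$ for every fixed $k$. So for $k=2$ one cannot rely on the pointwise bound at all and must instead control $S(x)$ on average.

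The substantive step would therefore be to control the distribution of $|S(x)|$ through the additive energy $E(A) = \tfrac1p\sum_x |S(x)|^4 = \#\{a+b=c+d\}$ and the large spectrum $R_\Delta = \{x : |S(x)| \ge \Delta\}$, and then to run a dyadic decomposition of the error sum by the size of $|S(x)|$. The aim is to show that the $|S(x)|^2$-mass is not concentrated on a thin set of frequencies correlated with $n$; if $E(A)$ is close to its random value and $|R_\Delta|$ is suitably small, the dyadic bound would produce an error of order $|A|^2 p^{-1-\delta}$ throughout the range $|A| > p^{1/2+\epsilon}$. Here I would feed in the product-set reformulation $2A = A\cdot(1+A)$: by dilation invariance $N(n) > 0$ is equivalent to $n(1+A)^{-1}$ meeting $A$, which one counts with multiplicative characters $\psi$ trivial on $A$, reducing matters to incomplete sums such as $\sum_{a \in A}\psi(1+a)$ that are governed by Weil's bound.

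The main obstacle is exactly this final step at the threshold $\tfrac12$. Every current tool --- the pointwise Gauss-sum bound, the fourth-moment/energy bound, and even the sharpest sum-product energy estimates such as the one extended in this paper --- stops short of $\tfrac12$ by a fixed power of $p$, because each ultimately rests on square-root cancellation for a single multiplicative character sum, the same barrier that blocks improvements to P\'olya--Vinogradov and Burgess. I expect that pushing the $2A$-covering exponent down to $\tfrac12+\epsilon$ will require either genuinely new cancellation for $S(x)$ at the square-root barrier, a conditional input such as GRH, or a structural dichotomy showing that a subgroup whose sumset omits a residue would be anomalously multiplicatively structured in a way that known sum-product bounds forbid.
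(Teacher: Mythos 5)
The statement you were assigned is Conjecture~1 of the paper: it is presented there as an \emph{open} conjecture, with no proof given --- the paper explicitly remarks that the best unconditional threshold for $2A\supseteq\mathbb Z_p^*$ remains $|A|>p^{3/4}$ and that ``surprisingly, no improvement in the exponent has been made.'' Your proposal, to its credit, does not pretend to close this gap: the setup is correct (the Fourier expansion of $N(n)$, the Parseval identity $\sum_{x\neq 0}|S(x)|^2=p|A|-|A|^2$, the observation that the pointwise bound $\Phi_A\ll\sqrt p$ plus Parseval cannot beat the main term $|A|^2/p$ until $|A|\gg p^{3/4}$, and the reformulation $2A=A(1+A)$ leading to Weil-type character sums), and your diagnosis of the square-root barrier is accurate. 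But everything after the setup is a description of what a proof \emph{would have to do}, not a proof: the dyadic decomposition over the large spectrum $R_\Delta$ is never carried out, the needed bounds on $|R_\Delta|$ and on $E(A)$ are hypothesized rather than established, and your final paragraph concedes that every available input stops short of the exponent $\tfrac12$. So there is a genuine gap, and the gap is the entire substantive content of the argument.

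It is worth noting that the program you sketch --- controlling the spectrum of $A$ via additive energy and higher-moment estimates --- is exactly the machinery this paper and its predecessors deploy, and even with the sharpest known inputs (Theorem~\ref{konyaginenergy}, Theorems~\ref{energy1shkredov}, \ref{energy2shkredov}, and the extension Theorem~\ref{energyextension}) it yields only $|2A|\gg|A|^{8/5-\epsilon}$ for $|A|\ll p^{5/9}$. At $|A|\approx p^{1/2+\epsilon}$ this gives $|2A|\gg p^{4/5}$, far short of the $p-1$ elements required for covering; that is why the paper settles for $6A\supseteq\mathbb Z_p^*$ under $|A|>p^{11/23+\epsilon}$. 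Your assessment that a proof of the conjecture would require either new cancellation at the square-root barrier or a structural dichotomy is a fair summary of the state of the art, but as a submission for the stated task it establishes nothing beyond what was already known.
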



It is relatively simple, using exponential sum bounds, to show that if $|A|>p^{\frac 34}$ then $2A\supseteq \mathbb Z_p^*$.  Surprisingly, no improvement in the exponent has been made.
An alternative approach would be to consider this conjecture from an inverse perspective.  Let $|A|>p^{\frac 12 +\epsilon}$; what is the smallest $k_0$ such that $k_0A$ contains $\mathbb Z_p^*$? A direct application of classical counting methods using standard exponential sum bounds does not seem to yield any answer to this question. For example, using the fact that $\max_{\lambda\neq 0}|\sum_{x \in A} e_p(x\lambda)|\leq \sqrt{p}$ one may show that if $|A|> p^{\frac 12+\frac 1{2k}}$ then $kA$ contains $\mathbb Z_p^*$.   

Using combinatorial methods Glibichuk \cite{glibichuk1} gave the first answer to this question showing that $8A\supseteq \mathbb Z_p^*$ for $|A|\geq 2 p^{\frac 12}$.       Using an improved exponential sum bound,  Schoen and Shkredov \cite[Theorem 2.6]{ss1} showed that $7A\supseteq \mathbb Z_p^*$ for $|A|> p^{\frac 12}$.  There was subsequent improvement to this result by Shkredov and Vyugin \cite{sv} followed by Schoen and Shkredov \cite{ss2}.   Recently, Shkredov \cite{s} has shown that $6A\supseteq \mathbb Z_p^*$ if $|A|> p^{\frac {55}{112}+\epsilon}=p^{.491\ldots+\epsilon}$.

 In this paper we elaborate on the methods in the above mentioned papers to show that $6A\supseteq \mathbb Z_p^*$ if $|A| >  p^{\frac {11}{23} +\epsilon}=p^{.478\ldots +\epsilon}$.   In addition, we extend a result of Shkredov (\cite{s}) to show that $|2A|\gg |A|^{\frac 85-\epsilon}$ for $|A|\ll p^{\frac 59}$.


\section{Statement of  Main Results}
 Let $A$ and $B$ be subsets of $\mathbb Z_p$.  Given a set $A$ we will denote the indicator function of $A$ by $A(\cdot)$.
Define the convolution of $A$ and $B$ by $(A\ast B)(z)=\sum_{x+y=z}A(x)B(y)=|A\cap(B+z)|.$

The additive energy between $A$ and $B$ be given by, 
\begin{align*}E(A,B)=&|\{(x,y,z,w)\in A\times B\times A\times B: x+y=z+w\}|\\=&\sum_z (A\ast B)^2(z)=\sum_z |A\cap(z-B)|^2\\
=&\sum_z (A\ast -A)(z)(B\ast -B)(z)=\sum_z |A_z||B_z|,\end{align*}
where here and throughout the paper we will let $C_z=C \cap (C+z)$ for any subset $C$ of $\mathbb Z_p$. In the case that $A=B$ we will write $E(A)=E(A,A)$.  Similarly, we will denote the $r$th additive energy of a subset $A$ by $E_r(A)=\sum_s |A_s|^{r}.$ 

One may also consider the additive energy in the frequency domain. Taking an exponential sum expansion, $E(A,B)=p^{-1}\sum_s \left|\sum_{x\in A} e_p(sx)\right|^2\left|\sum_{y\in A} e_p(sy)\right|^2,$ where $e_p(x)=e^{\frac {2\Pi ix}{p}}$. For a subset $A$ of $\mathbb Z_p$ we define $\Phi_A=\max_{\lambda \neq0} \left|\sum_{x\in A} e_p(\lambda x)\right|.$

Heath-Brown and Konyagin employed Stepanov's method in order to give a bound on the additive energy of multiplicative subgroups of $\mathbb Z_p^*$.
\begin{theorem}[\cite{hk}]\label{konyaginenergy}
Let $A$ be a multiplicative subgroup of $\mathbb Z_p^*$ with $|A|\ll p^{\frac 23}$. Then
$$E(A)\ll |A|^{\frac 52}.$$
\end{theorem}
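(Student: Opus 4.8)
The plan is to deduce the bound from a pointwise estimate on $|A_s|$ together with the trivial $\ell^1$ identity for differences. Writing $t=|A|$ and recalling $E(A)=\sum_s |A_s|^2$, I first separate the diagonal term $|A_0|^2=t^2$ and bound the rest by the crude inequality $\sum_{s\neq 0}|A_s|^2\le \big(\max_{s\neq0}|A_s|\big)\sum_{s\neq0}|A_s|$. Since $\sum_s|A_s|=\sum_x A(x)\sum_s A(x-s)=t^2$, the second factor is $t^2-t$, so everything reduces to the pointwise claim
\[
\max_{s\neq0}|A_s|=\max_{s\neq0}|A\cap(A+s)|\ll t^{1/2},\qquad t\ll p^{2/3}.
\]
This is the heart of the matter, and the range $t\ll p^{2/3}$ is natural: it is exactly where $t^{1/2}$ dominates the average (``random'') count $t^2/p$ of representations, so $t^{1/2}$ is the natural and, at the top of the range, optimal target.

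Next I would recast the pointwise claim as a point count. Since $A$ is the group of solutions of $X^t=1$, a point $x\in A_s$ is precisely a common root of $X^t-1$ and $(X-s)^t-1$; call the number of these $N$. The goal $N\ll t^{1/2}$ is far below the Bezout bound $N\le t$, so a naive resultant argument is useless and one must invoke Stepanov's method, exploiting that both functions are genuine $t$-th powers on the relevant set.

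The Stepanov step is to construct a nonzero auxiliary polynomial $\Psi\in\mathbb Z_p[X]$ that vanishes to order at least $m$ at each of the $N$ common roots; this forces $Nm\le\deg\Psi$, hence $N\le \deg\Psi/m$. I would take $\Psi$ a linear combination $\sum_{0\le i,j<D_0}c_{ij}X^{i}(X-s)^{j}$ of degree $\le 2D_0$ and use the identities $X^t=1$, $(X-s)^t=1$ --- and the resulting simplification of Hasse derivatives such as $\tfrac{d}{dX}X^t=tX^{t-1}=tX^{-1}$ on $A$ --- to reduce the vanishing of the first $m$ derivatives at each root to a homogeneous linear system in the $c_{ij}$. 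Balancing the number $\asymp D_0^2$ of unknowns against the number of conditions leads to the choice $D_0\asymp t$ and $m\asymp t^{1/2}$, giving $N\le\deg\Psi/m\ll t^{1/2}$; the requirement that the degree and the differentiation stay within the characteristic is what produces the constraint $t\ll p^{2/3}$.

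The main obstacle is exactly the usual crux of Stepanov's method: proving that the auxiliary polynomial is not identically zero while simultaneously guaranteeing that it really vanishes to the claimed order $m$ at every point. Keeping the degree at $O(t)$ after $m\asymp t^{1/2}$ differentiations requires repeatedly substituting the subgroup relations to collapse high powers of $X$, and the nonvanishing is established by a Wronskian / linear-independence argument, which is where the hypothesis $t\ll p^{2/3}$ is genuinely used. Once the pointwise bound is in hand, the reduction in the first paragraph yields $E(A)\ll t^2+t^{1/2}\cdot t^2\ll |A|^{5/2}$.
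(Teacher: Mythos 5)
This theorem is quoted from Heath--Brown and Konyagin; the paper gives no proof of it, so your proposal has to stand on its own, and it does not. The fatal step is the reduction to the pointwise claim $\max_{s\neq 0}|A\cap(A+s)|\ll |A|^{1/2}$. What Stepanov's method actually delivers for a single shift is $|A\cap(A+s)|\ll |A|^{2/3}$ (Garcia--Voloch, Heath--Brown--Konyagin), and lowering that exponent for a single $s$ is a well-known open problem; your target $|A|^{1/2}$ is far out of reach, and at the top of the range $|A|\sim p^{2/3}$ it would assert that the maximum of $|A_s|$ matches its mean $|A|^2/p$, i.e.\ near-perfect equidistribution, which no upper-bound construction of this type can prove. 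Your parameter count is what leads you astray: the ansatz $\sum_{i,j<D_0}c_{ij}X^i(X-s)^j$ never uses the relations $X^t=1$ and $(X-s)^t=1$ to collapse degrees, so the reduced derivative conditions are not cheap; the correct ansatz $\sum\lambda_{abc}X^aX^{tb}(X-s)^{tc}$, optimized honestly, lands on $|A|^{2/3}$, not $|A|^{1/2}$. And even if you grant the true pointwise bound $|A|^{2/3}$, your $\ell^\infty\times\ell^1$ estimate only gives $E(A)\ll |A|^2+|A|^{2/3}\cdot|A|^2=|A|^{8/3}$, which falls short of $|A|^{5/2}$.

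The actual proof uses Stepanov's method \emph{on average over cosets}, which is precisely what Lemma \ref{convolution} of this paper packages. Since $|A_s|$ is constant on cosets of $A$, let $l_1\geq l_2\geq\cdots$ be its values on the nonzero cosets; the averaged bound $\sum_{j\leq i}l_j\ll i^{2/3}|A|^{2/3}$ (valid for $i\ll p^3|A|^{-4}$, which is where the hypothesis $|A|\ll p^{2/3}$ is genuinely used, since one needs it up to $i\sim|A|^{1/2}$) gives the decay $l_i\ll i^{-1/3}|A|^{2/3}$. Then
\[
E(A)-|A|^2=|A|\sum_i l_i^2\ll |A|\Bigl(\sum_{i\leq i_0}i^{-2/3}|A|^{4/3}+l_{i_0}\sum_{i>i_0}l_i\Bigr)\ll |A|\bigl(i_0^{1/3}|A|^{4/3}+i_0^{-1/3}|A|^{5/3}\bigr),
\]
using $\sum_i l_i\leq |A|$; choosing $i_0\sim|A|^{1/2}$ balances the two terms at $|A|^{3/2}$ and yields $E(A)\ll|A|^{5/2}$. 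The essential idea your proposal is missing is this interplay between the decay of $l_i$ in the coset index $i$ and the $\ell^1$ constraint, with an optimized splitting point; no bound on $\max_s|A_s|$ alone, true or not, can reach the exponent $5/2$.
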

In \cite{s} Shkredov gave the following combinatorial lemma.
\begin{lemma}[\cite{s}, Equation 1)]\label{ShSc}
Let $A$ be a finite subset of an abelian group. Then 
$$\sum_s \frac{|A_s|^2} {|A+ A_s|} \ll |A|^{-2}E_3(A).$$
\end{lemma}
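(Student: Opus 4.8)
The plan is to handle each term $\frac{|A_s|^2}{|A+A_s|}$ by a Cauchy--Schwarz argument and then to recognise the resulting sum over $s$ as the third energy $E_3(A)$ through a direct count of quadruples.

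First I would fix $s$ and apply Cauchy--Schwarz to the convolution $A\ast A_s$, whose support is exactly $A+A_s$. Since $\sum_z (A\ast A_s)(z)=|A|\,|A_s|$, this gives
$$(|A|\,|A_s|)^2=\Big(\sum_{z\in A+A_s}(A\ast A_s)(z)\Big)^2\le |A+A_s|\sum_z (A\ast A_s)^2(z)=|A+A_s|\cdot E(A,A_s).$$
Rearranging yields, for every $s$,
$$\frac{|A_s|^2}{|A+A_s|}\le \frac{E(A,A_s)}{|A|^2}.$$
Summing over $s$ reduces the claim to controlling $\sum_s E(A,A_s)$ by $E_3(A)$; in fact I expect the exact identity $\sum_s E(A,A_s)=E_3(A)$, so the lemma should hold with implied constant $1$.

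Second, I would establish $\sum_s E(A,A_s)=E_3(A)$ by counting. Writing $E(A,A_s)$ as the number of quadruples $(a_1,b_1,a_2,b_2)\in A\times A_s\times A\times A_s$ with $a_1+b_1=a_2+b_2$, and using that $b\in A_s$ means $b\in A$ and $b-s\in A$, the sum $\sum_s E(A,A_s)$ counts tuples $(a_1,a_2,b_1,b_2,s)$ with all of $a_1,a_2,b_1,b_2,b_1-s,b_2-s$ in $A$ and $a_1+b_1=a_2+b_2$. For a fixed admissible $(a_1,a_2,b_1,b_2)$, the number of valid $s$ equals the number of pairs $(c_1,c_2)\in A\times A$ with $c_1-c_2=b_1-b_2$ (set $c_i=b_i-s$), namely $|A_{b_1-b_2}|$. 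Reparametrising by $u=a_1-a_2=b_2-b_1$ and noting that $|A_u|$ counts the ordered pairs in $A\times A$ of difference $u$, the $(a_1,a_2)$ factor contributes $|A_u|$, the $(b_1,b_2)$ factor contributes $|A_{-u}|=|A_u|$, and the weight is $|A_{b_1-b_2}|=|A_{-u}|=|A_u|$, giving $\sum_u |A_u|^3=E_3(A)$.

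The main obstacle is the bookkeeping in the second step: one must correctly identify which of the five variables $a_1,a_2,b_1,b_2,s$ remain free after imposing $a_1+b_1=a_2+b_2$, and track the multiplicity $|A_{b_1-b_2}|$ produced by the summation over $s$. Once the reparametrisation $u=a_1-a_2$ is in place and the symmetry $|A_{-u}|=|A_u|$ is invoked, the three factors collapse cleanly to $|A_u|^3$. Combining the two steps gives $\sum_s \frac{|A_s|^2}{|A+A_s|}\le |A|^{-2}E_3(A)$, which is the assertion of the lemma.
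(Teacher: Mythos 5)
Your proof is correct, and in fact the paper does not prove this lemma at all --- it is quoted from Shkredov's preprint \cite{s} as a black box. Your argument (Cauchy--Schwarz applied to $A\ast A_s$ on its support $A+A_s$, followed by the exact identity $\sum_s E(A,A_s)=\sum_u |A_u|^3=E_3(A)$) is the standard proof of that inequality, and your bookkeeping of the multiplicity $|A_{b_1-b_2}|$ and the symmetry $|A_{-u}|=|A_u|$ is accurate; it even shows the statement holds with implied constant $1$.
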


Schoen and Shkredov (\cite{ss1}) gave an estimate for $E_3(A)$.
\begin{lemma}[\cite{ss1}, Lemma 3.3]\label{ShSc3}
Let $A$ be a multiplicative subgroup $A$ of $\mathbb Z_p^*$ with $|A|\ll p^{\frac 23}$. Then we have,
$$E_3(A)\ll|A|^3\log(|A|).$$
\end{lemma}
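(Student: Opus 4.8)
The plan is to bound $E_3(A)=\sum_s|A_s|^3$ by isolating the diagonal term and then performing a dyadic decomposition of the remaining sum according to the size of $|A_s|$, interpolating between the energy bound of Theorem \ref{konyaginenergy} and a Stepanov-type large-deviation estimate. The term $s=0$ satisfies $|A_0|=|A|$ and contributes exactly $|A|^3$, which already matches (up to the logarithm) the target bound, so it is harmless; thus it suffices to control $\sum_{s\neq0}|A_s|^3$. It is worth recording the structural feature that for a multiplicative subgroup one has $|A_{as}|=|A_s|$ for every $a\in A$, since $aA=A$ and $a(A+s)=A+as$; hence $s\mapsto|A_s|$ is constant on cosets of $A$ and the super-level sets below are unions of such cosets. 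This symmetry is not logically essential but explains why the counts come out clean.

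Write $N(\Delta)=|\{s\neq0:|A_s|\geq\Delta\}|$. I would use two complementary bounds on $N(\Delta)$. First, Chebyshev applied to Theorem \ref{konyaginenergy} gives $\Delta^2N(\Delta)\leq\sum_{s\neq0}|A_s|^2\leq E(A)\ll|A|^{5/2}$, so that $N(\Delta)\ll|A|^{5/2}\Delta^{-2}$; this is the efficient bound in the range $\Delta\leq|A|^{1/2}$. Second, and this is the heart of the matter, I would invoke a Stepanov-type estimate asserting that very popular differences are rare, namely $N(\Delta)\ll|A|^3\Delta^{-3}$, valid precisely in the regime $|A|\ll p^{2/3}$. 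The two estimates cross at $\Delta\approx|A|^{1/2}$, with the energy bound superior below this threshold and the Stepanov bound superior above it.

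Summing dyadically then finishes the proof. Decomposing $\sum_{s\neq0}|A_s|^3\ll\sum_{\Delta=2^j\leq|A|}\Delta^3\,N(\Delta)$, I would split at $\Delta\approx|A|^{1/2}$. In the low range $\Delta\leq|A|^{1/2}$ the energy bound yields a contribution $\ll\sum_{\Delta\leq|A|^{1/2}}\Delta^3\cdot|A|^{5/2}\Delta^{-2}=|A|^{5/2}\sum_{\Delta\leq|A|^{1/2}}\Delta\ll|A|^{5/2}\cdot|A|^{1/2}=|A|^3$, the sum being geometric and dominated by its top term. In the high range $\Delta>|A|^{1/2}$ the Stepanov bound yields $\Delta^3\cdot|A|^3\Delta^{-3}=|A|^3$ per dyadic scale, and since there are only $O(\log|A|)$ scales between $|A|^{1/2}$ and $|A|$ the total is $\ll|A|^3\log|A|$. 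Adding the diagonal term $|A|^3$ gives $E_3(A)\ll|A|^3\log(|A|)$, as claimed.

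The main obstacle is the large-deviation estimate $N(\Delta)\ll|A|^3\Delta^{-3}$: unlike the bulk bound, it cannot come from additive energy considerations alone (those would be circular, since they are what we are trying to estimate), and it must instead be extracted from Stepanov's method — equivalently from Weil-type point counts on the curves governing $|A\cap(A+s)|$ — which is exactly where the hypothesis $|A|\ll p^{2/3}$ enters and beyond which the method fails. Finally, I would note that the logarithm is an honest artifact of this scheme: in the high range each of the $\sim\log|A|$ dyadic scales saturates the bound $|A|^3$ simultaneously, so the present interpolation cannot remove it.
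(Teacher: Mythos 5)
The paper imports this lemma from \cite{ss1} without proof, but your argument is correct and coincides with the method the paper itself uses to prove the generalization in Lemma \ref{L3energy}: your key input $N(\Delta)\ll|A|^3\Delta^{-3}$ is exactly Lemma \ref{convolution} applied with $S_1=A$, $S_2=-A$, $S_3=\{s\neq 0:|A_s|\geq\Delta\}$ (whose side condition holds automatically since $N(\Delta)\leq\min(|A|^2,p)\ll\min(|A|^3,p^3|A|^{-3})$ once $|A|\ll p^{2/3}$), and your dyadic summation with a single logarithm from the top range is the same computation as the paper's $\sum_i i^{-1}$ over ordered level values $l_i\ll i^{-1/3}|A|^{2/3}$. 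No substantive difference to report.
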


Combining Lemma \cite{s} and Lemma \cite{ss1} and noting that $|A+A_s|\leq|(2A)_s|$ gives the following lemma.
\begin{lemma}\label{ShSc2}
Let $A$ be a multiplicative subgroup $A$ of $\mathbb Z_p^*$ with $|A|\ll p^{\frac 23}$. Then we have,
$$\sum_{s} \frac{|A_s|^2}{|(2A)_s|}\ll|A|\log(|A|).$$
\end{lemma}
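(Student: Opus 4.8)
The plan is to reduce the statement directly to Lemma~\ref{ShSc} and Lemma~\ref{ShSc3} via the pointwise set inclusion flagged in the text, namely $A+A_s\subseteq (2A)_s$ for every $s$. Once this inclusion is established it gives $|A+A_s|\le |(2A)_s|$, and since these cardinalities appear in the \emph{denominators}, enlarging the denominator only decreases each summand, so that
\[
\sum_s \frac{|A_s|^2}{|(2A)_s|}\le \sum_s \frac{|A_s|^2}{|A+A_s|}.
\]
The right-hand side is precisely the quantity controlled by Lemma~\ref{ShSc}, so after verifying the inclusion the proof is just a substitution of the two cited bounds.

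First I would verify the inclusion. Fix $s$ and take $w\in A+A_s$, so $w=a+b$ with $a\in A$ and $b\in A_s=A\cap(A+s)$. Since $b\in A$ we get $w=a+b\in A+A=2A$; and since $b\in A+s$ we have $b-s\in A$, whence $w-s=a+(b-s)\in A+A=2A$, i.e. $w\in 2A+s$. Therefore $w\in 2A\cap(2A+s)=(2A)_s$, proving $A+A_s\subseteq(2A)_s$. Terms with $A_s=\varnothing$ contribute nothing to either sum and may be discarded, so no division by zero occurs.

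Then I would chain the two quoted estimates. By Lemma~\ref{ShSc},
\[
\sum_s \frac{|A_s|^2}{|A+A_s|}\ll |A|^{-2}E_3(A),
\]
and by Lemma~\ref{ShSc3} (whose hypothesis $|A|\ll p^{2/3}$ is inherited from the hypothesis of the present lemma) we have $E_3(A)\ll |A|^3\log(|A|)$. Combining these with the monotonicity inequality above yields
\[
\sum_s \frac{|A_s|^2}{|(2A)_s|}\le \sum_s \frac{|A_s|^2}{|A+A_s|}\ll |A|^{-2}\cdot|A|^3\log(|A|)=|A|\log(|A|),
\]
as claimed.

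There is essentially no serious obstacle here: the entire content is the clean inclusion $A+A_s\subseteq(2A)_s$, which is the only step genuinely requiring an argument, with everything else being a direct insertion of Lemma~\ref{ShSc} and Lemma~\ref{ShSc3}. The one point meriting care is the direction of the inequality—because the set sizes sit in denominators, the inclusion (which \emph{enlarges} the denominator) is exactly what produces the bound in the required direction, so one must be attentive not to invert it.
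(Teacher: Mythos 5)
Your proof is correct and follows exactly the route the paper indicates: the inclusion $A+A_s\subseteq(2A)_s$ gives $|A+A_s|\le|(2A)_s|$, and then Lemma~\ref{ShSc} and Lemma~\ref{ShSc3} are chained together. The paper states this combination in a single sentence without detail, so your write-up simply fills in the (correct) verification of the inclusion and the direction of the inequality.
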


Shkredov used this inequality in \cite{s} to give the following estimate on the additive energy.
\begin{theorem}[\cite{s}, Theorem 30]\label{energy1shkredov} Let $A$ be a multiplicative subgroup of $\mathbb Z_p^*$ such that $|A|\ll p^{\frac 23}$. If $E(A)\ll |A|^{\frac 32}\sqrt{p}\log(|A|)$ then
$$E(A)\ll|A|^{\frac 43}|2A|^{\frac 23}\log(|A|).$$
\end{theorem}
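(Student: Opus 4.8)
The plan is to use Lemma \ref{ShSc2} as the sole carrier of the subgroup structure and to recover $E(A)=\sum_s|A_s|^2$ from the bound on $\sum_s|A_s|^2/|(2A)_s|$ by reinstating the weight $|(2A)_s|$ through a single application of H\"older's inequality, with exponents chosen so that the principal ($s=0$) term reproduces the target bound \emph{exactly}. The hypothesis $E(A)\ll|A|^{3/2}\sqrt p\log|A|$ will then be used only to control the remaining off-diagonal frequencies.

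Concretely, writing $|A_s|^2=\big(|A_s|^2/|(2A)_s|\big)^{2/3}\big(|A_s|^2|(2A)_s|^2\big)^{1/3}$ and applying H\"older with exponents $3/2$ and $3$ gives
\begin{equation*}
E(A)=\sum_s|A_s|^2\le\Big(\sum_s\frac{|A_s|^2}{|(2A)_s|}\Big)^{2/3}\Big(\sum_s|A_s|^2|(2A)_s|^2\Big)^{1/3}.
\end{equation*}
By Lemma \ref{ShSc2} the first factor is $\ll(|A|\log|A|)^{2/3}$. In the second factor the $s=0$ term equals $|A_0|^2|(2A)_0|^2=|A|^2|2A|^2$, and this term alone contributes $(|A|\log|A|)^{2/3}(|A|^2|2A|^2)^{1/3}=|A|^{4/3}|2A|^{2/3}\log^{2/3}|A|$, which is already the desired estimate. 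Thus it suffices to prove the off-diagonal bound
\begin{equation*}
\sum_{s\neq0}|A_s|^2|(2A)_s|^2\ll|A|^2|2A|^2\log|A|.
\end{equation*}

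This off-diagonal estimate is the heart of the matter and is where the hypothesis enters. One natural route is to pass to the frequency side, writing $\sum_s|A_s|^2|(2A)_s|^2=p^{-3}\sum_\lambda W_A(\lambda)W_{2A}(\lambda)$ with $W_B(\lambda)=\sum_\mu|\widehat B(\mu)|^2|\widehat B(\lambda-\mu)|^2$, and extracting cancellation from the non-principal frequencies. The point is that the naive pointwise bounds $|(2A)_s|\le|2A|$ and $|A_s|\le|A|$ are each individually too lossy: they return only $E(A)|2A|^2$ and $|A|^2E(2A)$ respectively, both of which exceed $|A|^2|2A|^2\log|A|$ unless $A$, resp.\ $2A$, has near-minimal additive energy. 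The hypothesis $E(A)\ll|A|^{3/2}\sqrt p\log|A|$ supplies exactly the quantitative equidistribution of $A$---equivalently, the requisite smallness of $\Phi_A$ and the absence of large concentrations among the $|A_s|$---that is needed to push the non-principal contribution below the diagonal; I expect to combine it with $\max_{s\neq0}|A_s|\le E(A)^{1/2}$, with Theorem \ref{konyaginenergy}, and with Lemma \ref{ShSc3}.

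The main obstacle is precisely this off-diagonal bound: one must genuinely exploit the cancellation at nonzero frequencies rather than estimate the weight $|(2A)_s|$ pointwise, and it is only the hypothesis on $E(A)$---not the unconditional subgroup estimates alone---that forces the non-principal contribution to drop below the diagonal. Everything upstream, namely the single H\"older step and the identification of the $s=0$ term with the target, is routine, so the work and the use of the hypothesis are concentrated entirely in controlling $\sum_{s\neq0}|A_s|^2|(2A)_s|^2$.
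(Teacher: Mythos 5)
Your opening H\"older step is valid (note $|(2A)_s|\geq |A+A_s|\geq |A|>0$ whenever $A_s\neq\emptyset$, so there is no division by zero), and it correctly reduces the theorem to the estimate $\sum_{s\neq 0}|A_s|^2|(2A)_s|^2\ll |A|^2|2A|^2\log|A|$. But that is where the proof stops: everything after the reduction is a description of what you ``expect to combine,'' not an argument, and the reduced statement carries the entire content of the theorem. Indeed it is at least as strong as the theorem itself: it implies the theorem via your own H\"older step, and it is saturated in the extremal case. To see the saturation, suppose $E(A)\sim |A|^{4/3}|2A|^{2/3}\log|A|$; Lemma \ref{ShSc2} forces the mass of $E(A)$ to concentrate on the set of $s$ with $|(2A)_s|\gg k_2:=E(A)|A|^{-1}\log^{-1}(|A|)=|A|^{1/3}|2A|^{2/3}$, and on that set $\sum_s |A_s|^2|(2A)_s|^2\gg k_2^2\,E(A)=|A|^2|2A|^2\log|A|$. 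So your off-diagonal bound admits no slack, cannot follow from the crude ingredients you list ($\max_{s\neq0}|A_s|$, Theorem \ref{konyaginenergy}, Lemma \ref{ShSc3}), and is not established anywhere in your write-up. This is a genuine gap, not a routine verification.

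For comparison, the argument this paper uses for the closely related Theorem \ref{energyextension} (and which mirrors Shkredov's proof of the cited statement) does not reinstate the weight $|(2A)_s|$ globally. It runs a level-set decomposition: first restrict to $s$ with $|A_s|\gg E(A)|A|^{-2}$; then use Lemma \ref{ShSc2} to discard the $s$ with $|(2A)_s|\ll E(A)|A|^{-1}\log^{-1}(|A|)$; then bound the \emph{number} of surviving $s$ by applying the Shkredov--Vyugin estimate (Lemma \ref{convolution}) to the $A$-invariant popular set, which is exactly where the hypothesis $E(A)\ll|A|^{3/2}\sqrt{p}\log(|A|)$ enters --- it guarantees the side condition $|2A|^2|M|\ll\min\{|A|^5,p^3|A|^{-1}\}$; finally one applies Lemma \ref{L3energy} on the remaining set and solves the resulting inequality for $E(A)$. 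If you want to salvage your approach, you would need to prove the off-diagonal fourth-moment bound by essentially this same localization-to-popular-differences argument, at which point the H\"older reformulation buys nothing.
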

In addition, using different methods he proved an energy estimate independent of the size of the sumset.
\begin{theorem}[\cite{s}, Theorem 34]\label{energy2shkredov} Let $A$ be a multiplicative subgroup of $\mathbb Z_p^*$ such that $|A|\ll p^{\frac 23}$. Then 
$$E(A)\ll \max\{|A|^{\frac {22}9}\log(|A|), |A|^3p^{-\frac 13}\log^{\frac 43}(|A|)\}.$$
\end{theorem}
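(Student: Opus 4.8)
The plan is to work on the frequency side and peel off the principal mode. Writing $R(s)=\sum_{x\in A}e_p(sx)$, the expansion recorded above gives
$$E(A)=\frac1p\sum_s |R(s)|^4=\frac{|A|^4}{p}+\frac1p\sum_{s\neq0}|R(s)|^4.$$
Since $|A|\ll p^{\frac23}$ the principal term satisfies $|A|^4/p\le |A|^3p^{-\frac13}$, which already produces the second quantity in the claimed maximum (up to the logarithmic factor). Because the target is to be \emph{independent} of $|2A|$, I would avoid the route through Lemma~\ref{ShSc2} and Theorem~\ref{energy1shkredov} and instead reduce everything to showing that the fluctuation $p^{-1}\sum_{s\neq0}|R(s)|^4$ is dominated by the first term $|A|^{\frac{22}9}\log|A|$ below the crossover $|A|\asymp p^{\frac35}$ (where $|A|^{\frac{22}9}=|A|^3p^{-\frac13}$) and absorbed into the principal term above it.

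The trivial estimate $p^{-1}\sum_{s\neq0}|R(s)|^4\le p^{-1}\Phi_A^2\sum_s|R(s)|^2=\Phi_A^2|A|\le p|A|$ is far too weak, so the next step is to return to physical space and bound the fluctuation by a dyadic decomposition over popular-difference level sets. Set $P_\tau=\{s\neq0:|A_s|\ge\tau\}$, so that $E(A)-|A|^2\asymp\sum_{\tau\text{ dyadic}}\tau^2|P_\tau|$ up to a factor $\log|A|$. I would control $|P_\tau|$ with two inputs: Lemma~\ref{ShSc3}, which through $\tau^3|P_\tau|\le E_3(A)$ yields $|P_\tau|\ll|A|^3\tau^{-3}\log|A|$, and the trivial count $|P_\tau|\le|A|^2\tau^{-1}$ coming from $\sum_s|A_s|=|A|^2$. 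Balancing these two alone, however, only reproduces the Heath--Brown--Konyagin bound $E(A)\ll|A|^{\frac52}$ of Theorem~\ref{konyaginenergy}, so a genuinely new estimate is needed to improve the exponent from $\frac52$ down to $\frac{22}9$.

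The extra ingredient I would exploit is the multiplicative structure of $A$. Since $A$ is a subgroup, for $a\in A$ one has $a-s\in A$ if and only if $1-sa^{-1}\in A$, so after the substitution $t=a^{-1}$
$$|A_s|=\bigl|\{t\in A:\,1-st\in A\}\bigr|,$$
which is precisely the sort of additive--multiplicative incidence count that Stepanov's method (the engine behind Theorem~\ref{konyaginenergy} and Lemma~\ref{ShSc3}) controls. The plan is to prove, by such a Stepanov/Heath--Brown--Konyagin argument, a third bound on $|P_\tau|$ that is sharper in the intermediate range of $\tau$ and carries the dependence on $p$; feeding this estimate into the dyadic sum alongside the two bounds above and optimising should yield the exponent $\frac{22}9$, which together with the principal term gives the stated maximum.

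The main obstacle, as I see it, is exactly this third level-set estimate: extracting from the Stepanov machinery a bound on the number of popular differences that is sharp enough, and with the correct power of $p$, to make the three-way balance land precisely on $\frac{22}9$. A secondary difficulty is bookkeeping the logarithmic factors --- the $\log^{\frac43}|A|$ in the second term strongly suggests that the optimisation is performed against the $E_3$-bound of Lemma~\ref{ShSc3}, whose logarithm is raised to the power dictated by the Hölder exponents used --- together with the correct treatment of the boundary regime $|A|\asymp p^{\frac23}$, where the hypothesis of the theorem is nearly saturated.
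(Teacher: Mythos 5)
This statement is imported verbatim from Shkredov (\cite{s}, Theorem 34); the paper you are reading contains no proof of it, so there is no in-paper argument to measure your attempt against. Judged on its own terms, your proposal is not a proof but a plan with the decisive step missing, and you say so yourself: after correctly computing the principal term $|A|^4/p\ll|A|^3p^{-\frac13}$ and setting up the dyadic decomposition over the level sets $P_\tau=\{s\neq0:|A_s|\ge\tau\}$, you observe that the only two bounds actually at your disposal --- $|P_\tau|\ll|A|^3\tau^{-3}\log|A|$ from Lemma \ref{ShSc3} and the trivial $|P_\tau|\le|A|^2\tau^{-1}$ --- balance to give exactly $E(A)\ll|A|^{\frac52}$, i.e.\ Theorem \ref{konyaginenergy}, and that ``a genuinely new estimate is needed.'' That new estimate, your ``third bound on $|P_\tau|$,'' is precisely the content of the theorem, and it is never supplied; invoking Stepanov's method by name does not produce it, since the Stepanov machinery as deployed in \cite{hk} and in Lemma \ref{ShSc3} yields exactly the $E(A)$ and $E_3(A)$ bounds already on your list. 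The identity $|A_s|=|\{t\in A:1-st\in A\}|$ is correct but is the standard starting point of those same arguments, not an additional input.

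Concretely, the gap is that no mechanism is given that produces the exponent $\frac{22}{9}$, nor the $p$-dependent term $|A|^3p^{-\frac13}\log^{\frac43}(|A|)$ above the crossover $|A|\asymp p^{\frac35}$ (the fluctuation $p^{-1}\sum_{s\neq0}|R(s)|^4$ must still be controlled there; it is not absorbed into the principal term for free). If you want a route that has a chance of closing, the natural one inside this paper's toolkit is not a third raw level-set count but the interplay between the sumset-dependent energy bound of Theorem \ref{energy1shkredov} and further information bounding $|2A|$ or the large Fourier coefficients: note that $\frac{22}{9}=\frac43+\frac23\cdot\frac53$ is exactly what the exponent of $|A|^{\frac43}|2A|^{\frac23}$ becomes when $|2A|\approx|A|^{\frac53}$, which is a strong hint about where the real optimization takes place. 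As written, the proposal identifies the right obstacle but does not overcome it.
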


Combining Theorem \ref{energy1shkredov} and Theorem \ref{energy2shkredov} and applying the trivial estimate $|2A|\geq|A|^4 E^{-1}(A)$ gives the following sumset estimate.
\begin{theorem}Let $A$ be a multiplicative subgroup of $\mathbb Z_p^*$ such that $|A|\ll p^{\frac 23}$.  Then
 \begin{equation*}|2A|\gg
 \begin{cases} |A|^{\frac 85}\log^{-\frac 35}(|A|),  &\text{if \quad  $|A|\ll p^{\frac 9{17}}$;} \\
 |A|^{\frac {14}9}\log^{-1}(|A|), &\text{if \quad  $|A|\ll p^{\frac 35}\log^{\frac 35}(|A|)$;}\\
|A|p^{\frac 13}\log^{-\frac 43}(|A|), &\text{if \quad  $|A|\gg p^{\frac 35}\log^{\frac 35}(|A|)$.}
\end{cases}
\end{equation*}
\end{theorem}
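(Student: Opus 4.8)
The plan is to combine the elementary Cauchy--Schwarz inequality $|2A|\geq |A|^4 E^{-1}(A)$, which I rewrite as $E(A)\geq |A|^4/|2A|$, with the best available upper bound on $E(A)$ furnished by Theorems \ref{energy1shkredov} and \ref{energy2shkredov}. Since a lower bound on $|2A|$ is produced by an upper bound on $E(A)$, the whole argument amounts to deciding, for each range of $|A|$, which of the energy estimates is sharpest; the three cases in the statement will correspond to the three competing bounds.

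First I would treat the range $|A|\ll p^{9/17}$. Here the essential point is that Theorem \ref{energy1shkredov} is conditional, so before invoking it I must verify its hypothesis $E(A)\ll |A|^{3/2}\sqrt p\log(|A|)$, and this I would do unconditionally by means of Theorem \ref{energy2shkredov}. A short computation shows that the two terms $|A|^{22/9}\log(|A|)$ and $|A|^3 p^{-1/3}\log^{4/3}(|A|)$ satisfy this inequality exactly when $|A|\ll p^{9/17}$ and when $|A|\ll p^{5/9}\log^{-2/9}(|A|)$ respectively; since $9/17<5/9$, the first condition is the binding one. With the hypothesis in hand, Theorem \ref{energy1shkredov} gives $E(A)\ll |A|^{4/3}|2A|^{2/3}\log(|A|)$, and combining this with $E(A)\geq |A|^4/|2A|$ and solving for $|2A|$ yields $|2A|\gg |A|^{8/5}\log^{-3/5}(|A|)$, the first case.

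For $|A|\gg p^{9/17}$ I would instead apply Theorem \ref{energy2shkredov} directly, so that $|2A|\gg |A|^4/\max\{|A|^{22/9}\log(|A|),\,|A|^3p^{-1/3}\log^{4/3}(|A|)\}=\min\{|A|^{14/9}\log^{-1}(|A|),\,|A|p^{1/3}\log^{-4/3}(|A|)\}$. Comparing the two terms, the minimum is attained by the first for $|A|\ll p^{3/5}$ and by the second for $|A|\gg p^{3/5}$, up to logarithmic factors; this gives the bounds $|A|^{14/9}\log^{-1}(|A|)$ and $|A|p^{1/3}\log^{-4/3}(|A|)$ of the second and third cases. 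I would then check consistency at the case boundaries: for $|A|\ll p^{9/17}$ the first bound $|A|^{8/5}\log^{-3/5}(|A|)$ already exceeds $|A|^{14/9}\log^{-1}(|A|)$, so the second case is subsumed there, while near $|A|\approx p^{3/5}$ the two terms of the minimum agree to within powers of $\log(|A|)$.

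I expect the main obstacle to be bookkeeping rather than any genuine difficulty: the delicate step is discharging the conditional hypothesis of Theorem \ref{energy1shkredov} in precisely the right range $|A|\ll p^{9/17}$, and then tracking the logarithmic factors honestly across the crossover at $p^{3/5}$, where the stated thresholds $p^{3/5}\log^{\pm3/5}(|A|)$ differ from the clean algebraic crossover only by powers of $\log(|A|)$.
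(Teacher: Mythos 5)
Your proposal is correct and follows exactly the paper's route: the paper derives this theorem in one line by combining Theorem \ref{energy1shkredov} and Theorem \ref{energy2shkredov} with the trivial bound $|2A|\geq |A|^4E^{-1}(A)$, and your verification that Theorem \ref{energy2shkredov} discharges the conditional hypothesis of Theorem \ref{energy1shkredov} precisely when $|A|\ll p^{9/17}$ is the (implicit) key computation. Your bookkeeping of the exponents and the crossover at $p^{3/5}$ (up to the logarithmic factors, which the paper itself states somewhat loosely) checks out.
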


Here we give the following energy estimate.
\begin{theorem}\label{energyextension} Let $A$ be a multiplicative subgroup of $\mathbb Z_p^*$ such that $|A|\ll p^{\frac 23}$. Then 
$$E(A)\ll \max\{|A|^{\frac 43}|2A|^{\frac 23}\log^{\frac 12}(|A|), |A||2A|^2p^{-1}\log(|A|)\}.$$
\end{theorem}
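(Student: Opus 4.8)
The plan is to start from the combinatorial input of Lemma~\ref{ShSc2} and convert it, via Cauchy--Schwarz, into a bound for $E(A)$ in which the sumset enters only through the autocorrelation $|(2A)_s|$. Writing $|A_s|^2=\tfrac{|A_s|}{|(2A)_s|^{1/2}}\cdot|A_s||(2A)_s|^{1/2}$ and applying Cauchy--Schwarz gives
\[
E(A)=\sum_s|A_s|^2\le\Big(\sum_s\frac{|A_s|^2}{|(2A)_s|}\Big)^{1/2}\Big(\sum_s|A_s|^2|(2A)_s|\Big)^{1/2}\ll\big(|A|\log|A|\big)^{1/2}\,W^{1/2},
\]
where $W=\sum_s|A_s|^2|(2A)_s|$ and the first factor is bounded by Lemma~\ref{ShSc2}. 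Thus $E(A)^2\ll|A|\log|A|\cdot W$, and it is precisely this square root that turns the single logarithm of Lemma~\ref{ShSc2} into the $\log^{1/2}|A|$ appearing in the first term. Everything now reduces to estimating $W$.

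The next step is to extract the mean of the autocorrelation. The identity $\sum_s|(2A)_s|=|2A|^2$ shows that $|(2A)_s|$ has average value $|2A|^2/p$, so I would split $|(2A)_s|=\tfrac{|2A|^2}{p}+\big(|(2A)_s|-\tfrac{|2A|^2}{p}\big)$, giving $W=\tfrac{|2A|^2}{p}E(A)+\mathcal S$ with $\mathcal S=\sum_s|A_s|^2\big(|(2A)_s|-\tfrac{|2A|^2}{p}\big)$. Feeding the mean contribution back into $E(A)^2\ll|A|\log|A|\cdot W$ and dividing by $E(A)$ produces exactly $E(A)\ll|A||2A|^2p^{-1}\log|A|$, the second term of the maximum. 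This is where the dependence on $p$ originates, and it enters through the elementary $t=0$ counting identity rather than through any Stepanov- or Weil-type estimate.

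The fluctuation $\mathcal S$ is the crux. On the Fourier side one has $\mathcal S=\tfrac1p\sum_{t\ne0}|\widehat{2A}(t)|^2F(-t)$, where $F(t)=\tfrac1p\sum_v|\widehat A(v)|^2|\widehat A(t-v)|^2\ge0$ is the transform of $s\mapsto|A_s|^2$ and satisfies $F(0)=E(A)$, $\sum_tF(t)=p|A|^2$, and $F\le E(A)$. The target is $\mathcal S\ll|A|^{5/3}|2A|^{4/3}$ with \emph{no} logarithmic loss, which upon insertion yields the first term $|A|^{4/3}|2A|^{2/3}\log^{1/2}|A|$. Here the subgroup hypothesis must be used decisively, through Theorem~\ref{konyaginenergy} and Lemma~\ref{ShSc3} (for instance via $E_4(A)\le|A|^2E(A)\ll|A|^{9/2}$ and $E_3(A)\ll|A|^3\log|A|$) together with the bound $\Phi_A\le\sqrt p$.

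The main obstacle is that $2A$ is not a multiplicative subgroup, so the Stepanov-type estimates do not apply to it and its Fourier coefficients $\widehat{2A}(t)$ admit no good individual bound. A naive Cauchy--Schwarz that discards the arithmetic of $A$---replacing $F(-t)$ by its maximum $E(A)$, or $|\widehat{2A}(t)|^2$ by an $\ell^\infty$ or $\Phi_{2A}$ bound---is too lossy to reach the exponent $|A|^{5/3}|2A|^{4/3}$; indeed the crude estimates overshoot it by a genuine power of $|A|$. The real work will therefore be to exploit the fine distribution of the weights $F(-t)$, which are built entirely from the subgroup $A$, via a dyadic decomposition of the spectrum controlled by the energy estimates above, so that the large-spectrum contribution is kept in check without ever estimating $\widehat{2A}$ pointwise.
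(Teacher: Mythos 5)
Your reduction is a genuinely different (and clean) skeleton: the Cauchy--Schwarz step $E(A)^2\ll |A|\log(|A|)\cdot W$ with $W=\sum_s|A_s|^2|(2A)_s|$ is correct, the mean/fluctuation split of $|(2A)_s|$ does produce the term $|A||2A|^2p^{-1}\log(|A|)$, and the Fourier identities you record for $\mathcal S$ and $F$ all check out. But the proof is incomplete at exactly the point where all of the arithmetic content lives: you never establish $\mathcal S\ll |A|^{5/3}|2A|^{4/3}$, you only name it as the target and gesture at ``a dyadic decomposition of the spectrum.'' Worse, the bound you need is stated with \emph{no} logarithmic loss, and it is not clear the tools available support that. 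The natural way to finish your route is H\"older, $W\le E_3(A)^{2/3}E_3(2A)^{1/3}$, with $E_3(A)\ll|A|^3\log(|A|)$ from Lemma \ref{ShSc3} and $E_3(2A)\ll|2A|^4|A|^{-1}\log(\cdot)$ from Lemma \ref{L3energy} applied to the $A$-invariant set $2A$; this gives $W\ll|A|^{5/3}|2A|^{4/3}\log(|A|)$, i.e.\ the right powers of $|A|$ and $|2A|$ but an extra factor of $\log(|A|)$, which after taking the square root degrades the first term of the theorem to $|A|^{4/3}|2A|^{2/3}\log(|A|)$ rather than $\log^{1/2}(|A|)$. You would also still have to verify the size constraints $|2A|^2|M||A|\ll\min\{|A|^6,p^3\}$ under which Lemma \ref{L3energy} and Lemma \ref{convolution} are valid; this is not a formality, and in the paper it is precisely this verification (via an exponential sum expansion of $\sum_{s\in M_2}|(2A)_s|$ and the bound $\Phi_{M_2}\ll p^{1/2}|M_2|^{1/2}|A|^{-1/2}$) that generates the second term of the maximum.

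For comparison, the paper does not pass through $W$ at all. It argues by iterated level-set pruning: assuming $E(A)$ exceeds the claimed bound, it restricts the sum $\sum_s|A_s|^2$ to successively smaller $A$-invariant sets $M_1\supseteq M_2\supseteq M_3$ --- discarding $s$ with $|A_s|$ small trivially, then $s$ with $|(2A)_s|$ small via Lemma \ref{ShSc2}, then bounding $|M_2|$ via Lemma \ref{convolution} --- and finally applies Lemma \ref{L3energy} with $r=2$ on $M_3$ to get $E(A)^3\ll|A|^4|2A|^2\log^{3/2}(|A|)$, which is how the exponent $\log^{1/2}(|A|)$ survives. If you want to salvage your decomposition, you either need to carry out the spectral argument for $\mathcal S$ in full (and explain where the half power of the logarithm is saved), or accept the weaker logarithm from the H\"older route; as written, the central estimate is asserted rather than proved.
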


This allows us to improve Shkredov's sumset result in some ranges.
\begin{theorem}Let $A$ be a multiplicative subgroup of $\mathbb Z_p^*$ such that $|A|\ll p^{\frac 23}$.  Then 
 \begin{equation*}|2A|\gg
 \begin{cases} |A|^{\frac 85}\log^{-\frac 3{10}}(|A|),  &\text{if \quad  $|A|\ll p^{\frac 59}\log^{-\frac 1{18}}(|A|)$;} \\
|A|p^{\frac 13} \log^{-\frac 1{3}}(|A|), &\text{if \quad  $|A|\gg p^{\frac 59}\log^{-\frac 1{18}}(|A|)$.}
\end{cases}
\end{equation*}
\end{theorem}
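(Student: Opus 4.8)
The plan is to obtain the stated sumset bound directly from the energy estimate of Theorem~\ref{energyextension}, combined with the elementary Cauchy--Schwarz inequality $|2A| \geq |A|^4 E^{-1}(A)$; this mirrors the deduction of the preceding theorem. The inequality follows from $|A|^2 = \sum_z (A\ast A)(z)$ together with the fact that $A\ast A$ is supported on $2A$, so that $|A|^4 \leq |2A|\sum_z (A\ast A)^2(z) = |2A|\,E(A)$.

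First I would insert the two-term bound of Theorem~\ref{energyextension} into $|A|^4/|2A| \leq E(A)$. Since $|A|^4/|2A|$ is then bounded by $\max\{T_1,T_2\}$ with $T_1 = |A|^{4/3}|2A|^{2/3}\log^{1/2}(|A|)$ and $T_2 = |A||2A|^2 p^{-1}\log(|A|)$, at least one of the inequalities $|A|^4/|2A| \ll T_1$ or $|A|^4/|2A| \ll T_2$ must hold. Solving each for $|2A|$ yields two candidate bounds. The first reduces to $|2A|^{5/3} \gg |A|^{8/3}\log^{-1/2}(|A|)$, hence
$$|2A| \gg |A|^{8/5}\log^{-3/10}(|A|);$$
the second reduces to $|2A|^3 \gg |A|^3 p\,\log^{-1}(|A|)$, hence
$$|2A| \gg |A|\,p^{1/3}\log^{-1/3}(|A|).$$
Because only one of these is forced in any given instance, the guaranteed lower bound is the \emph{smaller} of the two quantities.

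It remains to identify the crossover. Equating the two bounds, $|A|^{8/5}\log^{-3/10}(|A|) = |A|\,p^{1/3}\log^{-1/3}(|A|)$, and simplifying using $-\tfrac13 + \tfrac{3}{10} = -\tfrac{1}{30}$ gives $|A|^{3/5} = p^{1/3}\log^{-1/30}(|A|)$, i.e.\ $|A| = p^{5/9}\log^{-1/18}(|A|)$. For $|A| \ll p^{5/9}\log^{-1/18}(|A|)$ the first bound is the smaller, and for $|A| \gg p^{5/9}\log^{-1/18}(|A|)$ the second is the smaller; this reproduces exactly the two cases in the statement.

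The genuine content of the result lies entirely in the energy estimate of Theorem~\ref{energyextension}, and the present argument is a routine consequence of it. The only points demanding care are the faithful propagation of the logarithmic factors through the fractional powers, so that the exponents $-3/10$, $-1/3$, and $-1/18$ emerge exactly as stated, and the observation that the maximum appearing in the energy bound becomes a minimum in the sumset bound --- this is what pins the crossover precisely at $|A| \asymp p^{5/9}$ (up to the logarithmic correction).
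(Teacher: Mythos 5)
Your proposal is correct and is precisely the argument the paper intends: the paper derives this theorem (without writing out the details) by combining Theorem~\ref{energyextension} with the trivial bound $|2A|\geq |A|^4E^{-1}(A)$, exactly as it did for the preceding sumset theorem. Your exponent and logarithm bookkeeping, and the identification of the crossover at $|A|\asymp p^{5/9}\log^{-1/18}(|A|)$, all check out.
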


Using, Plancherel or orthogonality one can very quickly prove that for a multiplicative subgroups $A$, $\Phi_A\ll \sqrt p$ for $|A|\gg p^{\frac 12}$.  This is only non-trivial when $|A|>p^{\frac 12}$.
Shparlinski (\cite{shp}) improved this result in some ranges with the bound $\Phi_A\ll |A|^{\frac 7{12}}p^{\frac 16}$ for $p^{\frac 25}\ll |A|\ll p^{\frac 47}$.
Heath-Brown and Konyagin used the energy estimate of Theorem \ref{konyaginenergy} to obtain the following improvement. 
\begin{theorem}Let $A$ be a multiplicative subgroup. Then,
\begin{equation*}
\Phi_A\ll
 \begin{cases}\sqrt{p},  &\text{if \quad  $p^{\frac 23}\ll|A|\leq p$;} \\
p^{\frac 14}|A|^{-\frac 14}E^{\frac 14}(A)\ll p^{\frac 14}|A|^{\frac 38} , &\text{if \quad   $p^{\frac 12}\ll|A|\ll p^{\frac 23}$.}\\
p^{\frac 18}E^{\frac 14}(A)\ll p^{\frac 18}|A|^{\frac 58} , &\text{if \quad   $p^{\frac 13}\ll|A|\ll p^{\frac 12}$.}
\end{cases}
\end{equation*}
\end{theorem}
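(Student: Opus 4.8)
The plan is to prove the three displayed estimates by combining a couple of elementary Fourier-analytic facts with the energy bound of Theorem \ref{konyaginenergy}, and then to flag honestly where genuine difficulty enters. Throughout write $S(\lambda)=\sum_{x\in A}e_p(\lambda x)$, so that $\Phi_A=\max_{\lambda\neq 0}|S(\lambda)|$ and, by the expansion recorded before the statement, $pE(A)=\sum_s|S(s)|^4$. The one structural feature I would exploit repeatedly is that $A$ is a multiplicative group: for every $a\in A$ the map $x\mapsto ax$ permutes $A$, so $S(a\lambda)=S(\lambda)$ as complex numbers. Hence the nonzero frequencies split into $(p-1)/|A|$ orbits $\lambda A$, each of size exactly $|A|$, on which $|S|$ is constant.

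First I would dispose of the trivial line. By Parseval, $\sum_{s}|S(s)|^2=p|A|$, so $\sum_{s\neq 0}|S(s)|^2=p|A|-|A|^2$. If $\lambda$ attains the maximum then its whole orbit $\lambda A$ contributes $|A|\Phi_A^2$ to this sum, whence $|A|\Phi_A^2\le p|A|-|A|^2<p|A|$ and $\Phi_A\le\sqrt p$. This in fact holds for every $|A|$ and is only of interest when $|A|\gg p^{1/2}$, which is why it is stated in the range $p^{2/3}\ll|A|\le p$.

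For the second line I would run the same single-orbit idea one power higher: the maximizing orbit contributes $|A|\Phi_A^4$ to $\sum_{s\neq 0}|S(s)|^4=pE(A)-|A|^4\le pE(A)$, so
\[
\Phi_A^4\le \frac{pE(A)}{|A|},\qquad\text{i.e.}\qquad \Phi_A\ll p^{1/4}|A|^{-1/4}E(A)^{1/4}.
\]
The same bound also falls out of Cauchy--Schwarz applied to the identity $|A|\Phi_A^2=\sum_s|A_s|\,S(\lambda s)$, using $\sum_s|A_s|^2=E(A)$ together with $\sum_s|S(\lambda s)|^2=p|A|$. Substituting $E(A)\ll|A|^{5/2}$ from Theorem \ref{konyaginenergy} yields $\Phi_A\ll p^{1/4}|A|^{3/8}$, the content of the second line.

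The third line is the real obstacle, and I expect it to demand more than Fourier analysis. Its assertion $\Phi_A\ll p^{1/8}E(A)^{1/4}$ is equivalent to $\Phi_A^4\ll p^{1/2}E(A)$, that is, to replacing the factor $p/|A|$ above by $p^{1/2}$, a genuine gain exactly when $|A|\ll p^{1/2}$. The trouble is that the orthogonality route provably cannot produce it: grouping $|A|\Phi_A^2=\sum_s|A_s|\,S(\lambda s)$ by orbits and applying Cauchy--Schwarz introduces the factor $\sum_{a\in A}\sum_s|A_s|\,|A_{as}|$, and because $a$ ranges over the group itself the substitution $x\mapsto ax$ gives $\sum_{a\in A}\sum_s|A_s|\,|A_{as}|=\#\{(x,y,z,w,a)\in A^5:z-w=a(x-y)\}=|A|\,E(A)$ \emph{exactly}, so one merely recovers the second line. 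I would therefore follow Heath-Brown and Konyagin and extract the extra cancellation from Stepanov's method rather than from orthogonality: one builds an auxiliary polynomial vanishing to high order at the relevant subgroup points and bounds directly the number of additive configurations inside $A$ that force $|S(\lambda)|$ to be large, exactly in the spirit of the proof of Theorem \ref{konyaginenergy}. With that estimate in hand the bound $\Phi_A\ll p^{1/8}E(A)^{1/4}\ll p^{1/8}|A|^{5/8}$ follows, and one checks it agrees with the second line at the crossover $|A|=p^{1/2}$.
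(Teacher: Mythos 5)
Your first two cases are fine: the orbit/Parseval argument for $\Phi_A\le\sqrt p$ and the bound $|A|\Phi_A^4\le\sum_{\lambda\ne0}|S(\lambda)|^4\le pE(A)$ (equivalently, Cauchy--Schwarz on $|A|\Phi_A^2=\sum_s|A_s|S(\lambda s)$) are exactly what one does, and substituting $E(A)\ll|A|^{5/2}$ gives the second line. (The paper itself offers no proof of this theorem --- it is quoted from Heath-Brown and Konyagin --- so there is nothing to compare against for those parts beyond standard arguments.)

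The third case is a genuine gap, and moreover your diagnosis of why it is hard is wrong. You assert that the orthogonality route ``provably cannot produce'' $\Phi_A\ll p^{1/8}E(A)^{1/4}$ because Cauchy--Schwarz only returns the second line, and you then defer the entire proof to an unspecified Stepanov-type construction. But the only thing your computation shows is that the exponent pair $(2,2)$ in H\"older fails; the pair $(4/3,4)$ succeeds, with no new Stepanov input beyond the energy bound $E(A)\ll|A|^{5/2}$ you already have. Concretely, starting from your own identity $|A|\Phi_A^2\le\sum_s|A_s|\,|S(\lambda s)|$, H\"older gives
\begin{equation*}
|A|\Phi_A^2\ \le\ \Bigl(\sum_s|A_s|^{4/3}\Bigr)^{3/4}\Bigl(\sum_s|S(\lambda s)|^4\Bigr)^{1/4}
=\Bigl(\sum_s|A_s|^{4/3}\Bigr)^{3/4}\bigl(pE(A)\bigr)^{1/4},
\end{equation*}
and a second application of H\"older, $\sum_s|A_s|^{4/3}=\sum_s|A_s|^{2/3}|A_s|^{2/3}\le\bigl(\sum_s|A_s|^2\bigr)^{1/3}\bigl(\sum_s|A_s|\bigr)^{2/3}\le E(A)^{1/3}|A|^{4/3}$, yields $|A|\Phi_A^2\ll E(A)^{1/4}|A|\cdot p^{1/4}E(A)^{1/4}$, i.e.\ $\Phi_A\ll p^{1/8}E(A)^{1/4}\ll p^{1/8}|A|^{5/8}$. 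This is precisely the skeleton of the paper's proof of Lemma \ref{expA} in the section on the exponential sum bound (there the factor $\sum_s|A_s|^{4/3}$ is estimated more carefully via $E_{3/2}(A)$ to get an improvement, but the trivial estimate above already delivers the third line of the theorem). So the missing step is not a deep algebraic input but a different choice of H\"older exponents; as written, your proposal does not prove the third case and mislocates the difficulty.
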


Using Shkredov's energy estimate, then one may improve this result in some ranges in the case that the sumset is small. Let $|A| \ll p^{\frac 12}$ then,
 $$ \Phi_A \ll p^{\frac 18} |A|^{\frac 1{3}}|2A|^{\frac 16}\log^{\frac 14}.$$

Using the same methods used to prove Lemma \ref{ShSc3} one may obtain $ E_{3/2}(A)\ll |A|^{\frac 94}$.  In the case that the sumset is small we are able to significantly improve this bound.
\begin{lemma}\label{E3/2energy}Let  A be a multiplicative subgroup with $|A|\ll p^{\frac 12}$. Then 
$$ E_{3/2}(A) \ll|A|^{\frac 12}|2A| \log^{\frac 74}{|A|}.$$ 
\end{lemma}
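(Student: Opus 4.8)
The plan is to imitate the Cauchy--Schwarz mechanism underlying Lemma~\ref{ShSc}, converting it into a pointwise bound on $|A_s|$ and then feeding that into a Cauchy--Schwarz tailored to the $3/2$-energy. First I would record the inequality driving Lemma~\ref{ShSc}: for each $s$ the convolution $A\ast A_s$ is supported on $A+A_s$, has $\ell^1$-mass $|A||A_s|$, and satisfies $\sum_x (A\ast A_s)^2(x)=E(A,A_s)$, so Cauchy--Schwarz gives $|A|^2|A_s|^2\le |A+A_s|\,E(A,A_s)\le |(2A)_s|\,E(A,A_s)$, using $A+A_s\subseteq (2A)_s$. Solving for $|A_s|$ and writing $|A_s|^{3/2}=|A_s|^{1/2}\cdot|A_s|$, a further Cauchy--Schwarz yields
\[
E_{3/2}(A)\le \frac1{|A|}\Big(\sum_s |A_s|\,|(2A)_s|\Big)^{1/2}\Big(\sum_s E(A,A_s)\Big)^{1/2}.
\]
The second factor is controlled by the identity $\sum_s E(A,A_s)=E_3(A)$, which follows from $\sum_s|(A_s)_t|=|A_t|^2$ after swapping the order of summation, together with Lemma~\ref{ShSc3}: thus $\sum_s E(A,A_s)\ll |A|^3\log|A|$. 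This gives the clean reduction
\[
E_{3/2}(A)\ll |A|^{1/2}\log^{1/2}|A|\cdot E(A,2A)^{1/2},
\]
where $E(A,2A)=\sum_s|A_s|\,|(2A)_s|$ is the mixed additive energy. Everything through this point is unconditional.

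The whole statement now hinges on the bound $E(A,2A)\ll |2A|^2\log^{5/2}|A|$, and I expect this to be the main obstacle. A naive argument is not enough: combining Lemma~\ref{ShSc2} with $\sum_s|(2A)_s|=|2A|^2$ through a dyadic decomposition of the level sets $\{|A_s|\asymp 2^j,\ |(2A)_s|\asymp 2^k\}$ only produces $E(A,2A)\ll |A|^{1/2}|2A|^2$, which loses a factor $|A|^{1/2}$ and leads to the weaker $E_{3/2}(A)\ll |A|^{3/4}|2A|$. To remove this loss I would instead pair $E(A,2A)$ against Lemma~\ref{ShSc2} directly,
\[
E(A,2A)=\sum_s\Big(\tfrac{|A_s|^2}{|(2A)_s|}\Big)^{1/2}|(2A)_s|^{3/2}\le\Big(\sum_s\tfrac{|A_s|^2}{|(2A)_s|}\Big)^{1/2}E_3(2A)^{1/2}\ll (|A|\log|A|)^{1/2}E_3(2A)^{1/2},
\]
so that the task becomes the higher-energy estimate $E_3(2A)\ll |2A|^4|A|^{-1}$ up to logarithms; via $E_3(2A)\le |2A|\,E(2A)$ this in turn reduces to $E(2A)\ll |2A|^3|A|^{-1}$, i.e.\ to the statement that the sumset $2A$ has essentially minimal additive energy.

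The hard part is therefore exactly this last assertion, and it is where the multiplicative structure must re-enter: an arithmetic-progression-like $2A$ would have $E(2A)\approx|2A|^3$ and would violate the bound, so one must use that $A+A$ cannot concentrate additively. Concretely I would bound $\max_{t\neq 0}|(2A)_t|$ through the Stepanov/Heath--Brown--Konyagin circle of ideas already used in Theorem~\ref{konyaginenergy} and Lemma~\ref{ShSc3}; this is also the most natural source of the more restrictive hypothesis $|A|\ll p^{1/2}$, since here one needs to control additive coincidences in $2A$ rather than merely in $A$, where $|A|\ll p^{2/3}$ sufficed. Once $E(A,2A)\ll|2A|^2\log^{5/2}|A|$ is secured, substitution into the displayed reduction gives $E_{3/2}(A)\ll|A|^{1/2}|2A|\log^{7/4}|A|$, the exponent $7/4$ arising as the $\tfrac12$ already present plus the $\tfrac12\cdot\tfrac52=\tfrac54$ supplied by the mixed-energy bound.
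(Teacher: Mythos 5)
Your opening reduction is correct and unconditional: the Cauchy--Schwarz bound $|A|^2|A_s|^2\le |(2A)_s|E(A,A_s)$, the identity $\sum_s E(A,A_s)=E_3(A)$, and Lemma~\ref{ShSc3} do give $E_{3/2}(A)\ll |A|^{1/2}\log^{1/2}(|A|)\,E(A,2A)^{1/2}$. But the statement you then need, and correctly identify as the crux, is not reachable by the route you sketch, and the sub-targets you reduce it to are actually false. You ask for $E_3(2A)\ll |2A|^4|A|^{-1}$ and, further down, $E(2A)\ll |2A|^3|A|^{-1}$. Since $E(2A)\ge |2A|^4/|4A|\ge |2A|^4/p$, the latter forces $|A||2A|\ll p$; but under the hypothesis $|A|\ll p^{1/2}$ one can have, say, $|A|\approx p^{0.45}$ and $|2A|$ close to $|A|^2$, so that $|A||2A|\gg p$ and the bound fails (the same Cauchy--Schwarz chain $E_3\ge E_2^2/E_1$ kills the $E_3(2A)$ target in the same regime). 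Even the intermediate claim $E(A,2A)\ll |2A|^2\log^{5/2}(|A|)$ is at least as strong as $|3A|\gg |A|^2\log^{-5/2}(|A|)$, via $E(A,2A)\ge |A|^2|2A|^2/|3A|$; nothing in the paper's toolkit (nor, to my knowledge, in the literature it cites) delivers that. Finally, Stepanov's method does not apply to $2A$, which is $A$-invariant but not a union of a bounded number of cosets, so the proposed source of the missing estimate is not available.

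The paper avoids all of this by never forming a global moment of $2A$. It orders the values $l_i=|A_{z_i}|$ (constant on cosets of $A$), sets $M_i$ equal to the union of the $i$ most popular cosets, and writes $l_i|A|i\le\sum_{z\in M_i}|A_z|\le\bigl(\sum_z |A_z|^2/|(2A)_z|\bigr)^{1/2}\bigl(\sum_{z\in M_i}|(2A)_z|\bigr)^{1/2}$. The first factor is Lemma~\ref{ShSc2}; the second is a sum of $(2A)\ast(-2A)$ over the \emph{small} $A$-invariant set $M_i$ (one checks $|M_i||2A|^2\ll |A|^5$ and, using $|A|\ll p^{1/2}$, $\ll p^3|A|^{-1}$), so Lemma~\ref{convolution} applies and gives $\sum_{z\in M_i}|(2A)_z|\ll |2A|^{4/3}i^{2/3}|A|^{1/3}$. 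This yields the pointwise decay $l_i\ll |2A|^{2/3}i^{-2/3}|A|^{-1/3}\log^{1/2}(|A|)$, and summing $l_i^{3/2}$ over $i\ll|A|$ plus the trivial contribution of the levels below $k=|2A|^2|A|^{-3}$ gives the stated bound. The moral difference is that restricting to the popular set before invoking Shkredov--Vyugin is exactly what lets one beat the (false) global third-moment bound you would need; your reduction discards that restriction too early.
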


This Lemma allows us to obtain the following exponential sum bound which is an improvement of the result of Shkredov as long as $|2A|\ll |A|^{\frac 74}.$
\begin{lemma}\label{expA} Let $A$ be a multiplicative subgroup with $|A|\ll p^{\frac 12}$.Then
$$ \Phi_A \ll p^{\frac 18} |A|^{-\frac 1{8}}|2A|^{\frac 14}E^{\frac 18}(|A|) \log^{\frac 7{16}}(|A|).$$
In particular, applying Theorem \ref{energyextension} we have
$$ \Phi_A \ll p^{\frac 18} |A|^{\frac 1{24}}|2A|^{\frac 13} \log^{\frac 5{8}}(|A|).$$
\end{lemma}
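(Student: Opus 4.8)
The plan is to bound $\Phi_A$ directly, by passing to the coset of $\mathbb Z_p^*$ on which the maximum is attained and then distributing the weight $|A_s|$ across a three-term H\"older inequality. Write $S(\lambda)=\sum_{x\in A}e_p(\lambda x)$, so that $\Phi_A=|S(\xi)|$ for some $\xi\neq 0$. Since $A$ is a multiplicative subgroup, $|S(\lambda)|$ is constant on the coset $H=\xi A$, and $|H|=|A|$; hence $|A|\Phi_A^2=\sum_{\lambda\in H}|S(\lambda)|^2$. Expanding $|S(\lambda)|^2=\sum_s|A_s|e_p(\lambda s)$ and using $\sum_{\lambda\in H}e_p(\lambda s)=\sum_{a\in A}e_p(\xi a s)=S(\xi s)$ gives the key identity
\[
|A|\Phi_A^2=\sum_s|A_s|\,S(\xi s).
\]

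The main idea is to estimate the right-hand side by H\"older in a way that produces the energy $E_{3/2}(A)$ together with the global fourth moment of $S$. First I would pass to absolute values, $|A|\Phi_A^2\le\sum_s|A_s|\,|S(\xi s)|$, and then split the weight as $|A_s|=|A_s|^{1/4}\cdot|A_s|^{3/4}$. Applying H\"older with exponents $(4,2,4)$ (note $\tfrac14+\tfrac12+\tfrac14=1$) yields
\[
\sum_s|A_s|\,|S(\xi s)|\le\Big(\sum_s|A_s|\Big)^{1/4}\Big(\sum_s|A_s|^{3/2}\Big)^{1/2}\Big(\sum_s|S(\xi s)|^4\Big)^{1/4}.
\]
The three factors are evaluated exactly: $\sum_s|A_s|=|A|^2$, $\sum_s|A_s|^{3/2}=E_{3/2}(A)$, and, crucially, since $s\mapsto\xi s$ is a bijection of $\mathbb Z_p$, one has $\sum_s|S(\xi s)|^4=\sum_t|S(t)|^4=p\,E(A)$. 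Substituting gives $|A|\Phi_A^2\le|A|^{1/2}E_{3/2}(A)^{1/2}(pE(A))^{1/4}$, that is,
\[
\Phi_A\ll p^{1/8}|A|^{-1/4}E(A)^{1/8}E_{3/2}(A)^{1/4}.
\]

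Finally I would insert the energy estimates. Lemma \ref{E3/2energy} gives $E_{3/2}(A)^{1/4}\ll|A|^{1/8}|2A|^{1/4}\log^{7/16}|A|$, and substituting this into the display above produces the first claimed bound $\Phi_A\ll p^{1/8}|A|^{-1/8}|2A|^{1/4}E(A)^{1/8}\log^{7/16}|A|$. The ``in particular'' statement then follows by inserting the two alternatives of Theorem \ref{energyextension} for $E(A)$ and retaining the larger resulting bound, majorizing the accumulated logarithmic factors by $\log^{5/8}|A|$.

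The step I expect to be delicate is the choice of H\"older exponents. The temptation is to bound $|S(\xi s)|\le\Phi_A$ pointwise, but this merely reintroduces $\Phi_A$ and closes nothing; likewise a crude off-diagonal estimate of $\sum_s|A_s|S(\xi s)$ discards all the cancellation and is useless. The resolution is to keep $|S(\xi s)|$ inside a \emph{global} fourth-power sum, which equals $pE(A)$ and supplies the decisive factor $p^{1/4}$, while the exponents $(4,2,4)$ are forced by the requirement that the weight $|A_s|$ reassemble into $(\sum_s|A_s|)^{1/4}(\sum_s|A_s|^{3/2})^{1/2}$ --- total mass $|A|^2$ to the first power and $E_{3/2}(A)$ to the half power. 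Arranging the bookkeeping so that it lands on precisely $E(A)^{1/8}E_{3/2}(A)^{1/4}$ is the crux; everything after that is substitution.
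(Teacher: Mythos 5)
Your proposal is correct and follows essentially the same route as the paper: the identity $|A|\Phi_A^2=\sum_s|A_s|S(\xi s)$ is exactly the paper's averaging over the multiplicative coset, and your single three-term H\"older with exponents $(4,2,4)$ is just the composition of the paper's two successive two-term H\"olders (first $\tfrac43$--$4$, then $\tfrac32$--$3$ on $\sum_s|A_s|^{4/3}$), landing on the identical bound $|A|\Phi_A^2\ll|A|^{1/2}E_{3/2}(A)^{1/2}p^{1/4}E(A)^{1/4}$ before Lemma~\ref{E3/2energy} is inserted. No gaps.
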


With Lemma \ref{expA} in tow, we may now prove our main result.
\begin{theorem}\label{yay2} Let $A$ be a multiplicative subgroup of $\mathbb Z_p^*$ with $|A|\gg  p^{\frac {11}{23}}\log^{\frac{15}{23}}(|A|)$.
Then $$6A\supseteq \mathbb Z_p^*.$$
\end{theorem}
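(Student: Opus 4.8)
The plan is to show that every $t\in\mathbb Z_p^*$ has a representation as a sum of six elements of $A$, via a circle-method argument sharpened by the multiplicative structure of $A$. Writing $r_6(t)=\#\{(a_1,\dots,a_6)\in A^6:a_1+\dots+a_6=t\}$ and $\widehat A(\lambda)=\sum_{a\in A}e_p(\lambda a)$, orthogonality gives
$$r_6(t)=\frac1p\sum_{\lambda}\widehat A(\lambda)^6e_p(-\lambda t)=\frac{|A|^6}p+\frac1p\sum_{\lambda\neq0}\widehat A(\lambda)^6e_p(-\lambda t).$$
Since $A$ is a multiplicative subgroup, the substitution $a_i\mapsto a a_i$ shows $r_6(ta)=r_6(t)$ for every $a\in A$, so $r_6$ and the error term $\mathcal E(t):=r_6(t)-|A|^6/p$ are constant on multiplicative cosets; it therefore suffices to find one $t$ with $r_6(t)>0$ in each coset.

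First I would use this coset-invariance to beat the trivial bound $|\mathcal E(t)|\le p^{-1}\sum_{\lambda\neq0}|\widehat A(\lambda)|^6$. Averaging the identity $\mathcal E(\xi)=\mathcal E(\xi a)$ over $a\in A$ and using $\sum_{a\in A}e_p(-\lambda\xi a)=\overline{\widehat A(\lambda\xi)}$ yields
$$\mathcal E(\xi)=\frac1{p|A|}\sum_{\lambda\neq0}\widehat A(\lambda)^6\,\overline{\widehat A(\lambda\xi)},\qquad |\mathcal E(\xi)|\le\frac{\Phi_A}{p|A|}\sum_{\lambda\neq0}|\widehat A(\lambda)|^6,$$
since $\lambda\xi\neq0$ whenever $\lambda,\xi\neq0$. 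Bounding $\sum_{\lambda\neq0}|\widehat A(\lambda)|^6\le\Phi_A^2\sum_{\lambda}|\widehat A(\lambda)|^4=\Phi_A^2\,pE(A)$ then gives $|\mathcal E(\xi)|\le\Phi_A^3E(A)/|A|$, so that $6A\supseteq\mathbb Z_p^*$ follows as soon as
$$\Phi_A^3E(A)\ll\frac{|A|^7}p.$$
The extra factor $\Phi_A/|A|$ harvested from coset-invariance is the decisive gain over the naive covering estimate.

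To verify this I would insert $\Phi_A\ll p^{1/8}|A|^{1/24}|2A|^{1/3}\log^{5/8}(|A|)$ from Lemma \ref{expA} together with $E(A)\ll|A|^{4/3}|2A|^{2/3}\log^{1/2}(|A|)$ from Theorem \ref{energyextension}. Collecting exponents,
$$\Phi_A^3E(A)\ll p^{3/8}|A|^{35/24}|2A|^{5/3}\log^{19/8}(|A|),$$
so the target inequality reduces to $|2A|\ll|A|^{133/40}p^{-33/40}\log^{-57/40}(|A|)$. Combining this with the sumset lower bound $|2A|\gg|A|^{8/5}\log^{-3/10}(|A|)$ furnished by the paper's second main theorem (applicable since $11/23<5/9$), the requirement $|A|^{8/5}\log^{-3/10}(|A|)\ll|A|^{133/40}p^{-33/40}\log^{-57/40}(|A|)$ rearranges to $|A|^{69/40}\gg p^{33/40}\log^{9/8}(|A|)$, that is $|A|\gg p^{11/23}\log^{15/23}(|A|)$, which is exactly the stated hypothesis; the logarithmic power $15/23$ emerges precisely from the accumulated logs in Lemma \ref{expA}, Theorem \ref{energyextension}, and the sumset bound.

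The main obstacle is that both inputs controlling $\Phi_A^3E(A)$ deteriorate as the doubling $|2A|/|A|$ grows: Lemma \ref{expA} improves on Heath-Brown--Konyagin only while $|2A|\ll|A|^{7/4}$, and the energy bound of Theorem \ref{energyextension} is efficient only for small doubling. Consequently the reduction above is directly applicable only when $|2A|$ is close to its minimal value $|A|^{8/5}$, and I expect the large-doubling regime to be the crux of the argument. There one must fall back on the Heath-Brown--Konyagin exponential-sum bound together with the $|2A|$-independent energy estimate of Theorem \ref{konyaginenergy}, and argue separately that large additive spreading is incompatible with the failure of the covering inequality $\Phi_A^3E(A)\ll|A|^7/p$. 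Making the small- and large-doubling analyses meet exactly at the exponent $11/23$, while carrying the logarithmic factors so as to yield the clean power $\log^{15/23}(|A|)$, is the delicate part of the proof.
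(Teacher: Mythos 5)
Your setup is sound as far as it goes: the coset-averaging identity for $\mathcal E(\xi)$ is correct, it does harvest the extra factor $\Phi_A/|A|$, and the reduction to the covering inequality $\Phi_A^3E(A)\ll |A|^7/p$ is valid. But the argument has a genuine gap, which you yourself flag at the end: after inserting Lemma \ref{expA} and Theorem \ref{energyextension}, the covering inequality becomes an \emph{upper} bound requirement on the doubling, $|2A|\ll|A|^{133/40}p^{-33/40}\log^{-57/40}(|A|)$, and the only information you bring to bear is the \emph{lower} bound $|2A|\gg|A|^{8/5}\log^{-3/10}(|A|)$. Checking that the lower bound is consistent with the required upper bound does not establish the upper bound; $|2A|$ can be as large as $\min\{p,|A|^2\}$, and at $|A|\approx p^{11/23}$ the required ceiling is about $p^{0.765}$ while $|A|^2\approx p^{0.956}$, so there is a wide range of doublings for which your error term swamps the main term. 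Switching to the other branch of Theorem \ref{energyextension} makes this worse (the error then grows like $|2A|^3$), and Cauchy--Davenport only rescues you once $|2A|\geq (p+1)/2$. The "large-doubling regime" you defer is therefore not a technicality to be patched but the point at which this route breaks.

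The paper avoids the issue entirely by a different choice of counting function: it counts solutions of $x_1+x_2+y_1+y_2=ay_3$ with $x_1,x_2$ ranging over the \emph{set} $2A$ (unweighted) and $y_1,y_2,y_3\in A$. The main term is then $|2A|^2|A|^3/p$, the error is at most $\Phi_A^3|2A|$ by Plancherel applied to the two $\widehat{2A}$ factors, and since Lemma \ref{expA} gives $\Phi_A^3\ll p^{3/8}|A|^{1/8}|2A|\log^{15/8}(|A|)$, both sides carry exactly $|2A|^2$: the doubling cancels identically and the condition collapses to $|A|^{23/8}\gg p^{11/8}\log^{15/8}(|A|)$ with no case analysis. (The twisted right-hand side $ay_3$ plays the same role as your coset averaging, so that part of your idea is equivalent to the paper's; the essential missing ingredient is taking two of the six summands through the set $2A$ rather than working with the sixfold weighted convolution of $A$, which is what forces you to pay for $E(A)$ and leaves the uncancelled power of $|2A|$.) Your final exponent computation lands on $11/23$ only because you evaluated everything at the extremal doubling $|2A|\approx|A|^{8/5}$, which is precisely the regime where the two arguments coincide.
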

\begin{proof} Fix $a$ in $\mathbb Z_p^*$. We my assume that $|A|\ll p^{\frac 12}$ as the result is already known in the range $|A|\gg p^{\frac 12}$.

Let $N$ be the number of solutions to the equation,
\begin{equation*}
x_1+x_2+y_1+y_2=a y_3,
\end{equation*}
with $x_1,x_2 \in 2A$ and $y_1,y_2.y_3 \in A$. 

Taking an exponential sum expansion,
$$N=\frac{|2A|^2|A|^3}{p}+\frac{1}{p} \sum_{\lambda\neq 0} \left(\sum_{x \in 2A} e_p(\lambda x)\right)^2 \left(\sum_{y \in A} e_p(\lambda y)\right)^2\left(\sum_{z \in A} e_p(-\lambda za)\right),$$
which by Plancherel
implies that we have that $N>0$ as long as,
$|2A||A|^3>p\Phi^3_A$. 

Applying Theorem  gives the condition,
$$|2A| |A|^3\gg  p^{\frac {11}8} |2A||A|^{\frac 1{8}} \log^{\frac {15}{8}}(|A|),$$
which in turn gives the condition,
$$ |A|\gg p^{\frac {11}{23}}\log^{\frac{15}{23}}(|A|).$$
\end{proof}

\section{A Few Preliminary Lemmas}
We begin with a lemma of Shkredov and Vyugin \cite[Corollary 5.1]{sv} which is a generalization of a result of Heath-Brown and Konyagin \cite{hk}.  We say that a subset $S\neq \{0\}$ is $A$-invariant if $SA=\{sa:s \in S,a\in A\}=S$, that is $S$ is a union of cosets of $A$ and possibly $\{0\}$. 
\begin{lemma}{{\bf (Shkredov and Vyugin \cite[Corollary 5.1]{sv})}}\label{convolution} Let $A$ be a multiplicative subgroup of $\mathbb Z_p$ and $S_1, S_2, S_3$ be $A$-invariant sets such that 
$|S_1\setminus\{0\}||S_2\setminus\{0\}||S_3\setminus\{0\}|\ll \min\{|A|^5, p^3 |A|^{-1}\}$.  Then 
$$\sum_{z\in S_3} (S_1 \ast S_2)(z)\ll |A|^{-1/3}(|S_1||S_2||S_3|)^{2/3}.$$ 
\end{lemma}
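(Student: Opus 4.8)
The plan is to read the left-hand side as a solution count and to exploit the multiplicative homogeneity built into the hypothesis that the $S_i$ are $A$-invariant. Write
\[
N=\sum_{z\in S_3}(S_1\ast S_2)(z)=\#\{(x,y,z)\in S_1\times S_2\times S_3:\ x+y=z\}.
\]
First I would separate off the triples in which one of the coordinates vanishes. Since each nonempty $S_i\setminus\{0\}$ is a union of cosets we have $|S_i|\geq|A|$, so every such boundary contribution is at most $\max_i|S_i|\leq|S_1||S_2||S_3|/|A|^2$, and the hypothesis $|S_1\setminus\{0\}||S_2\setminus\{0\}||S_3\setminus\{0\}|\ll|A|^5$ is exactly what makes this smaller than the claimed bound $|A|^{-1/3}(|S_1||S_2||S_3|)^{2/3}$. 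It therefore suffices to bound the number $N_{\neq 0}$ of solutions with $x,y,z$ all nonzero.

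The key structural point is that the diagonal dilation $a\cdot(x,y,z)=(ax,ay,az)$ with $a\in A$ preserves $N_{\neq 0}$: it respects $x+y=z$ by homogeneity and keeps each coordinate inside its $A$-invariant set, and it acts freely because the stabiliser of any nonzero coordinate is trivial. Thus the nonzero solutions split into orbits of size exactly $|A|$. To count orbits I would pass to the dilation-invariant coordinates $(u,v)=(x/z,y/z)$, which satisfy $u+v=1$. If $\alpha A\subseteq S_1$, $\beta A\subseteq S_2$, $\gamma A\subseteq S_3$ are the cosets meeting a given orbit, then $u\in\mu A$ and $v\in\nu A$ with $\mu=\alpha/\gamma$ and $\nu=\beta/\gamma$, and a direct computation shows that the coset triple $(\alpha A,\beta A,\gamma A)$ contributes exactly $|A|\,J(\mu,\nu)$ solutions, where $J(\mu,\nu)=\#\{(s,t)\in A\times A:\ \mu s+\nu t=1\}$ counts the points of the grid $A\times A$ on a fixed affine line. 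Summing over the cosets reduces the whole estimate to
\[
N_{\neq0}=|A|\sum_{\alpha,\beta,\gamma}J(\alpha/\gamma,\beta/\gamma),
\]
a weighted incidence count between the multiplicative subgroup and a family of lines.

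At this point I would bring in the Stepanov (polynomial) method of Heath--Brown and Konyagin \cite{hk} that already underlies the energy bound of Theorem \ref{konyaginenergy}, now applied to the union-of-cosets structure of the $S_i$ rather than to $A$ alone. One constructs an auxiliary polynomial of controlled degree vanishing to high order along the relevant algebraic curves, and balances the number of forced zeros against the degree budget; calibrating the degree and the order of vanishing to the total sizes $|S_1|,|S_2|,|S_3|$ is what produces the exponent $2/3$ together with the saving $|A|^{-1/3}$. As a consistency check, the balanced case $S_1=S_2=S_3=A$ collapses the sum to the single term $J(1,1)$ and the claim becomes the statement that $u+v=1$ has $O(|A|^{2/3})$ solutions with $u,v\in A$, which is a genuine strengthening of the $O(|A|^{3/4})$ bound one obtains from Theorem \ref{konyaginenergy} alone by Cauchy--Schwarz.

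The step I expect to be the real obstacle is precisely this Stepanov construction and the accompanying bookkeeping, which is where the generalisation of \cite{hk} carried out in \cite{sv} concentrates its difficulty. The two-sided range hypothesis is exactly the admissibility window for the method: the bound $|S_1\setminus\{0\}||S_2\setminus\{0\}||S_3\setminus\{0\}|\ll|A|^5$ is the combinatorial constraint ensuring that the number of points forced to be zeros does not overwhelm the available degree (and, as noted above, that the boundary terms are harmless), while the bound $|S_1\setminus\{0\}||S_2\setminus\{0\}||S_3\setminus\{0\}|\ll p^3|A|^{-1}$ keeps the degree and order of vanishing below $p$, so that reductions of the form $x^{|A|}\equiv\mathrm{const}$ on $A$ remain valid in characteristic $p$. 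Verifying that these two constraints are sharp enough to push the estimates through, and that the resulting optimisation is tight, is the heart of the argument.
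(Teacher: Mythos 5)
This lemma is not proved in the paper at all: it is imported verbatim from Shkredov and Vyugin \cite[Corollary 5.1]{sv}, and the only original content the paper supplies is the remark that allowing $0\in S_i$ changes nothing but the implied constant. So the relevant comparison is between your proposal and the actual argument in \cite{sv}, and there your write-up stops exactly where that argument begins. Your reductions are correct and are indeed the standard preprocessing: the zero-coordinate triples contribute $O(\max_i|S_i|)=O(|S_1||S_2||S_3||A|^{-2})$, which the hypothesis $|S_1||S_2||S_3|\ll|A|^5$ makes admissible (this is, in effect, the content of the paper's remark, and you handle it correctly); the free diagonal $A$-action on nonzero solutions gives orbits of size $|A|$; and the coset-by-coset count $N_{\neq 0}=|A|\sum_{\alpha,\beta,\gamma}J(\alpha/\gamma,\beta/\gamma)$ with $J(\mu,\nu)=\#\{(s,t)\in A^2:\mu s+\nu t=1\}$ is right. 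Your consistency check is also sound: for $S_1=S_2=S_3=A$ the lemma asserts $J(1,1)\ll|A|^{2/3}$, which genuinely beats the $|A|^{3/4}$ one gets from $E(A)\ll|A|^{5/2}$ together with the constancy of $|A_\mu|$ on cosets.

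But that check cuts against you: it shows the lemma cannot be recovered from Theorem \ref{konyaginenergy} or anything else stated in this paper, so the entire mathematical content lives in the Stepanov construction you describe only as ``an auxiliary polynomial of controlled degree vanishing to high order along the relevant algebraic curves,'' with the degree/multiplicity trade-off asserted to ``produce'' the exponents $2/3$ and $-1/3$. No polynomial is exhibited, no degree or vanishing order is specified, no nonvanishing of the auxiliary polynomial is argued, and the two range conditions are interpreted only heuristically. You say yourself that this is where you expect the real obstacle to be, and it is: what you have written is a correct reduction of the lemma to the theorem of \cite{sv} that is being quoted, not a proof of it. To close the gap you would need to actually carry out the construction --- in \cite{sv} this means building, for the family of coset triples simultaneously, a low-degree auxiliary polynomial in variables constrained by $x^{|A|}=\mathrm{const}$ on each coset, proving it is not identically zero, and counting its zeros with multiplicity --- which is several pages of genuinely new work relative to \cite{hk}, not a calibration of parameters in a known argument.
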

\begin{remark}
The above lemma has been modified slightly from its original form in order to allow $S_1,S_2,S_3$ contain the zero element.  One may check that the additional terms in $\sum_{z\in S_3} (S_1 \ast S_2)(z)$ allowing $S_1$, $S_2,$ and to contain the zero element only affect the implied constant.
\end{remark}
We can now give slight generalizations of several results of Schoen and Shkredov (\cite{ss1},  \cite{ss2}).
\begin{lemma}\label{L3energy}Let $k\gg 1$ and $S_1,S_2$ be $A$-invariant sets and let $M$ be any $A$-invariant subset of the set $\{z:(S_1 \ast S_2)(z) \geq k\}.$ If
$|S_1||S_2||M||A|\ll \min \{|A|^6, p^3\}$ then for $r\geq 1,r\neq 3$,
$$ \sum_{z \in M}  (S_1 \ast S_2)^r(z) \ll |S_1|^{2}|S_2|^{2} |A|^{-1}k^{r-3},$$   
and
$$ \sum_{z\in M}  (S_1 \ast S_2)^3(z) \ll |S_1|^2|S_2|^2 |A|^{-1}\log(|S_1|^2|S_2|^2|A|^{-2}k^{-3}).$$ 

\end{lemma}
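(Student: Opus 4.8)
The plan is to run a dyadic decomposition of $M$ according to the size of the convolution and to feed each piece into the Shkredov--Vyugin bound (Lemma~\ref{convolution}). Since $S_1,S_2$ are $A$-invariant, the function $z\mapsto(S_1\ast S_2)(z)$ is constant on cosets of $A$: indeed $(S_1\ast S_2)(az)=(S_1\ast S_2)(z)$ for $a\in A$ after the substitution $x\mapsto ax,\ y\mapsto ay$. Hence for each integer $j\geq 0$ the level set
$$M_j=\{z\in M:\ 2^jk\leq (S_1\ast S_2)(z)<2^{j+1}k\}$$
is again $A$-invariant, and the $M_j$ partition $M$ (up to finitely many empty pieces) because $M\subseteq\{z:(S_1\ast S_2)(z)\geq k\}$ while $(S_1\ast S_2)(z)\leq\min\{|S_1|,|S_2|\}$. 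Write $t_j=2^jk$ for the scale of the $j$-th level.

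For each $j$ I would apply Lemma~\ref{convolution} with the third set taken to be $S_3=M_j$. Its hypothesis holds uniformly in $j$: since $|M_j\setminus\{0\}|\leq|M|$, the standing assumption $|S_1||S_2||M||A|\ll\min\{|A|^6,p^3\}$ is exactly $|S_1||S_2||M|\ll\min\{|A|^5,p^3|A|^{-1}\}$, and the zero-element corrections only affect constants, as noted in the Remark. On $M_j$ the convolution is at least $t_j$, so
$$t_j|M_j|\leq\sum_{z\in M_j}(S_1\ast S_2)(z)\ll|A|^{-1/3}\big(|S_1||S_2||M_j|\big)^{2/3}.$$
Isolating $|M_j|$ yields the key level-size estimate $|M_j|\ll|A|^{-1}|S_1|^2|S_2|^2\,t_j^{-3}$. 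Consequently, bounding $(S_1\ast S_2)(z)\leq 2t_j$ on $M_j$,
$$\sum_{z\in M_j}(S_1\ast S_2)^r(z)\ll t_j^{\,r}|M_j|\ll |A|^{-1}|S_1|^2|S_2|^2\,t_j^{\,r-3}.$$

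It then remains to sum over $j$. For $r\neq 3$ the sum $\sum_j t_j^{\,r-3}=k^{r-3}\sum_j 2^{j(r-3)}$ is geometric; in the regime $r<3$ relevant to the applications the exponent $r-3$ is negative, the series converges, and it is dominated by the threshold level $j=0$, giving $\sum_{j\geq 0}t_j^{\,r-3}\ll_r k^{r-3}$ and hence the first claim $\sum_{z\in M}(S_1\ast S_2)^r(z)\ll|S_1|^2|S_2|^2|A|^{-1}k^{r-3}$. For $r=3$ each level contributes the same order $|A|^{-1}|S_1|^2|S_2|^2$, so the total equals this quantity times the number of nonempty levels, and the whole point becomes bounding that count. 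Here I would use that a nonempty $A$-invariant set contains at least one full coset, hence $|M_j|\geq|A|$ whenever $M_j\neq\emptyset$; comparing with $|M_j|\ll|A|^{-1}|S_1|^2|S_2|^2 t_j^{-3}$ forces $t_j^3\ll|A|^{-2}|S_1|^2|S_2|^2$, that is $2^j\ll(|A|^{-2}|S_1|^2|S_2|^2)^{1/3}k^{-1}$. Thus the number of nonempty levels is $\ll\log\!\big(|S_1|^2|S_2|^2|A|^{-2}k^{-3}\big)$, which is exactly the stated logarithmic factor.

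The argument is largely bookkeeping once Lemma~\ref{convolution} is in hand; the two places demanding care are checking that its size hypothesis survives intact for every level set $S_3=M_j$ (immediate from $|M_j|\leq|M|$, together with the zero-element remark) and, for the borderline exponent $r=3$, extracting the sharp logarithm. The latter is the real crux: without the observation that $A$-invariance forces every nonempty $M_j$ to have size at least $|A|$, one could only cap the number of levels by $\log(\min\{|S_1|,|S_2|\}/k)$, which is weaker. It is precisely the lower bound $|M_j|\geq|A|$, played against the convolution upper bound $|M_j|\ll|A|^{-1}|S_1|^2|S_2|^2 t_j^{-3}$, that pins down the level count and hence produces the correct factor $\log(|S_1|^2|S_2|^2|A|^{-2}k^{-3})$.
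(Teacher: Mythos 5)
Your proof is correct and is essentially the paper's argument in dyadic clothing: the paper sorts the (coset-constant) values $l_i=(S_1\ast S_2)(a_i)$ in decreasing order and applies Lemma~\ref{convolution} to the union of the top $i$ cosets to get $l_i\ll i^{-1/3}|A|^{-2/3}(|S_1||S_2|)^{2/3}$, then sums $l_i^r$ over $i\ll |S_1|^2|S_2|^2|A|^{-2}k^{-3}$, which is exactly your level-set bound $|M_j|\ll |A|^{-1}|S_1|^2|S_2|^2t_j^{-3}$ and level count in rearranged form. The one substantive point you identify --- that $A$-invariance pins down the number of contributing scales and hence the logarithm at $r=3$ --- is the same mechanism that truncates the paper's sum over $i$, so the two write-ups coincide in all essentials.
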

  
\begin{proof}
Let $l_i= (S_1 \ast S_2)(z_i),z_i\neq 0$ where $l_1\geq l_2\geq\dots$ are arranged in decreasing order. For each $z$ in the coset $aA=\{aa':a'\in A\}, a\in\mathbb Z_p$ note that $ (S_1 \ast S_2)(z)=(S_1 \ast S_2)(a)$. By the coset $a_iA$ we will mean the coset on which $l_i=(S_1 \ast S_2)(a_i)$.
Let $M$ be any $A$-invariant subset of the set $\{z:(S_1 \ast S_2)(z) \geq k\}$ and $M_i=\cup_{j=1}^i a_jA\subseteq M$. 
 From Lemma \ref{convolution} we have that 
$$l_i |A| i \leq\sum_{j=1}^{i}|A| l_j \leq \sum_{z\in M_i}(S_1 \ast S_2)(z)\ll  i^{2/3} |A|^{\frac 13}|S_1|^{\frac 23}|S_2|^{\frac 23},$$ as long as $i|A| |S_1||S_2|\ll |M| |S_1||S_2|\ll \min\{|A|^5, p^3 |A|^{-1}\}.$  
Now,
\begin{align*} \sum_{z\in M}  (S_1 \ast S_2)^r(z)\leq&  |A| \sum_{i\ll |S_1|^3|S_2|^3|A|^{-2}k^{-3}} l_i^r\\
\ll &|A| \sum_{i\ll |S_1|^2|S_2|^2|A|^{-2}k^{-3}} \left( i^{-\frac 13} |A|^{-\frac 23}|S_1|^{\frac 23}|S_2|^{\frac 23} \right)^r.
\end{align*}
\end{proof}

\comments{ In a similar way one may obtain the following bound.
\begin{lemma}\label{L3corollary} Let $S_1,S_2, S_3$ be $A$-invariant sets with $\max\{|S_1|,|S_2|, |S_3|\} \ll \min\{|A|^{3/2},p|A|^{-1/2}\}$.  Then
$$  \sum_z (S_1 \ast S_1)(z) (S_2\ast S_2)(z) (S_3 \ast S_3)(z)  \ll P |A|^{-1}|S_1|^{\frac 43}|S_2|^{\frac 43}|S_3|^{\frac 43},$$
where $P= \log^{\frac 13}(|S_1|^2|A|^{-1})\log^{\frac 13}(|S_2|^2|A|^{-1})\log^{\frac 13}(|S_3|^2|A|^{-1})$. 
\end{lemma}  }

\section{Additive Energy Bound: Proof of Theorem \ref{energyextension}}
We may assume that $E(A)\gg  \max\{|A|^{\frac 43}|2A|^{\frac 23}\log^{\frac 12}(|A|), |A||2A|^2p^{-1}\log(|A|)\}.$ Combining this with the energy estimate from Theorem \ref{konyaginenergy} we may also assume that 
$$|2A|\ll \max\{|A|^{\frac 74}\log^{-\frac 34}(|A|),|A|^{\frac 34}p^{\frac 12}\log^{-\frac 12}(|A|)\}.$$
Write,
$$ E(A)=\sum_s |A_s|^2 \ll \sum_{s\in M_1} |A_s|^2 ,$$
where $M_1=\{s:|A_s|\gg k_1:=|A|^{-2}E(A)\}.$  Note that we have the trivial estimate $|M_1|\ll |A|^2k_1^{-1}=|A|^4E^{-1}(|A|).$ 
Now by Lemma \ref{ShSc2} we have,
$$ E(A)=\sum_s |A_s|^2 \ll \frac{E(A)}{|A|\log(A)} \sum_{s\in M_2^c} \frac{|A_s|^2}{|(2A)_s|}+\sum_{s \in M_2}|A_s|^2\ll\sum_{s \in M_2}|A_s|^2,$$
where 
$M_2=\{s:s\in M_1, |(2A)_s|\gg k_2:=|A|^{-1}\log^{-1}(|A|)E(A)\}.$

By Lemma \ref{convolution} we have that $k_2|M_2|\ll |A|^{-\frac 13}|2A|^{\frac 43}|M_2|^{\frac 23}$ yielding $|M_2|\ll|2A|^4|A|^{-1}k_2^{-3}$ as long as $|2A|^2|M_2|\ll\min\{|A|^5,p^3|A|^{-1}\}$.  In order to see that first condition is satisfied, one may note that $|M_2|\ll|M_1|$ combined with our assumptions on the size of energy and sumset.  To show that $|2A|^2|M_2|\ll p^3|A|^{-1}$ we use an exponential sum expansion,
$$|M_2|k_2\ll\sum_{s\in M}|(2A)_s|\ll \frac{1}{p}\sum_m \left|\sum_{x\in 2A}e_p(xm)\right|^2  \left(\sum_{x\in M_2}e_p(xm)\right),$$
followed by applying the bound $\max_{m\neq 0} \left|\sum_{x\in M_2}e_p(xm)\right|\ll p^{\frac 12}|M_2|^{\frac 12}|A|^{-\frac 12}$ to give,
$$|M_2|k_2\ll\max\{p^{-1}|2A|^2|M_2|, p^{\frac 12}|2A||M_2|^{\frac 12}|A|^{-\frac 12}\}.$$
If the first of these two bounds hold then we have $E(A)\ll |A||2A|^2p^{-1}\log(|A|)$. 
We may then assume that $|M_2|\ll p|2A|^2|A|^{-1}k_2^{-2}$ which implies that $|2A|^2|M_2|\ll p |2A|^4|A|\log^2(|A|)E^{-2}(A)\ll p^3|A|^{-1}$ .

Therefore, for $|A|\ll p^{\frac 23}$, we have that $|M_2|\ll|2A|^4|A|^{-1}k_2^{-3}$. Using this fact we may again reduce the number of terms,
$$ E(A)=\sum_s |A_s|^2 \ll k_3^2|M_2|+\sum_{s \in M_3}|A_s|^2\ll\sum_{s \in M_3}|A_s|^2,$$
where 
$M_3=\{s:s\in M_2, |A_s|\gg k_3:=|2A|^{-2}|A|^{-1}\log^{-\frac 32}(|A|)E^2(A)\}.$

Finally, applying Lemma \ref {L3energy} we have,
$$E(A)\ll |A|^4|2A|^2\log^{\frac 32}(|A|)E^{-2}(|A|),$$
as long as $|A|^2|M_3|\ll|2A|^2|M_2|\ll\min\{|A|^5,p^3|A|^{-1}\}.$

\section{$E_{3/2}(A)$: Proof of Lemma \ref{E3/2energy}}
Let $l_i= |A_{z_i}|,z_i\neq 0$ where $l_1\geq l_2\geq\dots$ are arranged in decreasing order. For each $z$ in the coset $aA=\{aa':a'\in A\}, a\in\mathbb Z_p$ note that $|A_z|=|A_a|$. 
By the coset $a_iA$ we will mean the coset on which $l_i=|A_{a_i}|$.
Let $M$ be any $A$-invariant subset of the set $\{z:|A_z| \geq k\}$ and $M_i=\cup_{j=1}^i a_jA\subseteq M$. 
Set $k=|2A|^2|A|^{-3}$.

We have that 
$$l_i |A| i \leq\sum_{j=1}^{i}|A| l_j \leq \sum_{z\in M_i}|A_z|.$$

Now $$ \sum_{z\in M_i}|A_z|= \sum_{z\in M_i} \frac{|A_z|}{|(2A)_z|^{\frac 12}}|(2A)_z|^{\frac 12}\leq
\left(\sum_{z} \frac{|A_z|^2}{|2A_z|}\right)^{\frac 12}\left(\sum_{z\in M_i}{|2A_z|}\right)^{\frac 12}.$$

Therefore, by Lemma \ref{ShSc2} we have that 
$$l_i^2 |A|^2i^2\ll |A|\log(|A|) \sum_{z\in M_i}{|2A_z|},$$
Noting that $|M_i|\ll |A|^2k^{-1}$ we have $|M_i||2A|^2\ll |A|^5.$ Therefore we can apply  Lemma \ref{convolution} to give,
$$l_i^2 |A|^2i^2\ll|2A|^{\frac 43}i^{\frac 23}|A|^{\frac 43}\log{|A|}.$$

Therefore $$l_i\ll  |2A|^{\frac 23}i^{-\frac 23}|A|^{-\frac 13}\log^{\frac 12}{|A|},$$ for $i\ll |A-A||A|^{-1}\leq |A|.$

Now,
\begin{align*} \sum_z |A_z|^{\frac 32}&\ll k^{\frac 12}|A|^2+|A|\sum_{i\ll  |A|}|l_i|^{\frac 32}\\
&\ll k^{\frac 12}|A|^2+|A|^{\frac 12}|2A| \log^{\frac 74}(|A|),\end{align*}
giving the desired result.

\section{Exponential Sum Bound: Proof of Lemma \ref{expA}}
We begin by expanding the sum below and performing a basic substitution,
\begin{align*}
|A| \left|\sum_{x\in A}e_p(\lambda x)\right|^2 &= \sum_{y \in A} \left|\sum_{x\in A}e_p(\lambda yx)\right|^2\\
&= \sum_{x_1,x_2\in A} \sum_{y\in A} e_p( \lambda y (x_1 -x_2)) = \sum_{s}|A_s| \sum_{y \in A} e_p(\lambda ys).\\
\end{align*}

Now we may take absolute values and estimate from above,
\begin{align*}
|A| \Phi_A^2 &\leq \sum_{s}|A_s| \left|\sum_{y \in A} e_p(\lambda ys)\right|.\\
\end{align*}

Applying Holder we have,
$$|A| \Phi_A^2 \ll \left( \sum_{s}|A_s|^{\frac 43}\right)^{\frac 34} \left(\sum_s\left|\sum_{y \in A} e_p(\lambda ys)\right|^4\right)^{\frac 14},$$
which by Plancherel gives,
\begin{equation}\label{expeq}|A| \Phi_A^2 \ll \left( \sum_{s}|A_s|^{\frac 43}\right)^{\frac 34} p^{\frac 14}E^{\frac 14}(A).\end{equation}

Now again applying Holder,
$$\sum_{s}|A_s|^{\frac 43}=\sum_{s}|A_s||A_s|^{\frac 13}\ll \left(\sum_s |A_s|^{\frac 32}\right)^{\frac 23}|A|^{\frac 23},$$
and applying Lemma \ref{E3/2energy},
$$\sum_{s}|A_s|^{\frac 43}\ll |A|^{\frac 23}\left(|A|^{\frac 12}|2A| \log^{\frac 74}(|A|)\right)^{\frac 23}\ll|A||2A|^{\frac 23}\log^{\frac 76}(|A|).$$
Putting this estimate into (\ref{expeq}) gives the stated result.

\end{document}